\newtheorem{theorem}{Theorem}[section]
\newtheorem{lemma}[theorem]{Lemma}
\newtheorem{proposition}[theorem]{Proposition}
\newtheorem{definition}[theorem]{Definition}
\theoremstyle{definition}
\newtheorem{remark}[theorem]{Remark}
\numberwithin{equation}{section}
\newcommand{\CC}{\mathbb{C}}
\newcommand{\ZZ}{\mathbb{Z}}
\newcommand{\DD}{\mathbb{D}}
\renewcommand{\Re}{\mathfrak{Re}}
\renewcommand{\Im}{\mathfrak{Im}}
\newcommand{\HH}{\mathbf{H}}
\newcommand{\NN}{\mathbb{N}}
\DeclareMathOperator{\spa}{span}
\DeclareMathOperator{\ran}{ran}
\title[Schur analysis and discrete analytic functions]{Schur analysis and discrete analytic functions: Rational functions and co-isometric realizations}
\author[D.  Alpay]{Daniel Alpay}
\address{(DA)
Faculty of Mathematics, Physics, and Computation\\
Schmid College of Science and Technology\\
Chapman University\\
One University Drive
Orange, California 92866\\
USA}
\email{alpay@chapman.edu}
\author[D.  Volok]{Dan Volok}
\address{(DV) Mathematics Department\\ Kansas State University\\ 138 Cardwell Hall\\1228 N.  17th Street\\
  Manhattan, KS 66506, USA}
\email{danvolok@math.ksu.edu}
\begin{document}

\maketitle

\begin{abstract}
We define and study rational discrete analytic functions and prove the existence of a coisometric realization for discrete analytic Schur multipliers.
\end{abstract}
\mbox{}\\

\noindent AMS Classification: 30G25, 47B32

\noindent {\em Keywords:} Discrete analytic functions, rational function, coisometric realization

\date{today}
\tableofcontents
\section{Introduction}
Schur analysis originates with the work of Schur \cite{schur, schur2} (see \cite{hspnw} for some of the original works)
and in particular with the Schur algorithm, and can be defined as a collection of problems pertaining to functions analytic and contractive in
the open unit disk $\mathbb D$, with a view on applications to operator theory and the theory of linear systems. Often motivated by the latter, counterparts
of Schur analysis have been considered in a number of domains, such as upper triangular operators
(corresponding to linear time-varying systems; see \cite{MR93b:47027,MR94b:93027}), the polydisk (corresponding to $ND$-systems), the theory of slice hyperholomorphic functions \cite{zbMATH06658818}, and many others.
In each version of Schur analysis in a new setting, some aspects of the classical setting will not go through, providing
the peculiarities of the case under consideration.\smallskip

In the present paper we consider the discrete analytic (discrete analytic for short) functions framework.  The discrete analytic counterparts of the Hardy space and the Schur class  were introduced in  \cite{alpay2021discrete}.  Also in that work it was shown that, much like in the classical case,  discrete analytic Schur functions play an important role in interpolation problems.  The next natural steps  include characterization of the related de Branges - Rovnyak spaces in terms of translation-invariance (a Beurling - Lax type theorem; see Theorem \ref{beurlax}), coisometric realizations of discrete analytic Schur functions that establish a connection with linear discrete time system theory (Theorem \ref{schreal}),  and study of  rational discrete analytic functions.  \smallskip

The notion of rationality in the setting of discrete analytic functions is relative:  it depends on how one defines  products of functions. The usual pointwise product does not preserve discrete analyticity. This led to the conjecture of Isaacs in \cite{MR0052526} that every rational discrete analytic function is a polynomial.  Isaacs' notion of discrete analyticty was slightly different from the one used here,  and Isaacs' conjecture was ultimately proved false by Harman \cite{MR0361111}.  However,  these works indicate that the class of discrete analytic functions, that are rational in the sense of the pointwise product, is not rich enough for applications.\smallskip

Another notion of discrete analytic rationality,  based on  Cauchy - Kovalevskaya extension of the pointwise product from the real axis,  similar to Cauchy - Kovalevskaya product of hyperholomorphic functions in the quaternionic settting,  was introduced and studied in \cite{ajsv}.   \smallskip
 
Our current approach is based on  the convolution product $\odot$ of discrete analytic polynomials with respect to a certain "standard" basis $z^{(n)}$.  Several different bases of discrete analytic polynomials can be found in the literature.  For instance, a different polynomial basis suitable for "power" series expansions was considered by Zeilberger in \cite{zeil}.  The basis $z^{(n)},$ that previously  appeared in \cite{alpay2021discrete} and,  in a slightly different form,  in \cite{ajsv},  is closely related to eigenfunctions of elementary difference operators on the integer lattice, which makes it expedient from the computational point of view.  The corresponding class of rational discrete analytic functions turns out to be quite rich; it includes functions that may fail to have a power series expansion in terms of $z^{(n)}.$ This is achieved by utilizing a certain representation of a rational function,  related to transfer functions of discrete time linear systems (the so-called system realizations of rational functions). As an example of the differences between the discrete analytic setting and the classical case,  the reproducing kernel $K(z,w)$ of the discrete analytic Hardy space turns out to be rational as a function of $z$, but its McMillan degree depends on $w$.\smallskip

  The paper consists of five sections beside this introduction, and we give now its outline.  A brief survey of past results and some preliminary computations are presented in Sections \ref{eigensec} and \ref{reviewsec}.  In
  particular, some new identities pertaining to the two elementary difference operators are proved.  In Section \ref{rationalsec} we analyze the notion of discrete analytic  rationality based on the convolution product.  The main results of the paper are given in Section \ref{realsec}. Using the theory of linear relations, we construct coisometric realizations for discrete analytic Schur functions and characterize the associated de Branges - Rovnyak spaces in terms of translation-invariance.  Finally,  Section \ref{meshsec} gives some motivation by considering the case of an arbitrary mesh,  and letting the
  mesh go $0$.

  \section{Eigenfunctions of difference operators and a polynomial basis}
  \label{eigensec}

We recall some notations and definitions to set the framework. The symbol $\Lambda$ denotes the integer lattice $\mathbb Z+i\mathbb Z$. 
A function $f:\Lambda\longrightarrow \mathcal{V},$ where $\mathcal V$ is a complex vector space,  is said to be {\em discrete analytic} if 
\begin{equation}\label{ferrand}
\dfrac{f(z+1+i)-f(z)}{1+i}=\dfrac{f(z+1)-f(z+i)}{1-i},\quad z\in\Lambda.
\end{equation}
This definition originates with  Ferrand \cite{MR0013411}.  It can be re-formulated in terms of discrete integration as follows.  The discrete integral of function $f:\Lambda\longrightarrow\mathcal{V}$ over path $\gamma=(z_0,z_1,\dots, z_N)$ is defined by
    \begin{equation}
      \label{di}\int_\gamma f\delta z=\sum_{n=1}^N \dfrac{f(z_{k-1})+f(z_k)}{2}(z_k-z_{k-1}),
\end{equation}
 A function $f(z)$ is discrete analytic if,  and only if,  all the discrete integrals on closed paths are equal to $0$. \smallskip

In terms of the difference operators 
\begin{gather*}
\delta_xf (z):=f(z+1)-f(z),\\ \delta_yf(z):=f(z+i)-f(z),
\end{gather*}
equation \eqref{ferrand} can be re-written as
$$\bar D f=0,$$
where 
\begin{gather}
\label{dirac}
\bar D=\alpha_-\delta_x+\alpha_+\delta_y+\dfrac{1}{2}\delta_x\delta_y,\\
\alpha_\pm=\dfrac{1\pm i}{2}.
\end{gather}
The operator $\bar D$ plays the role of the Wirtinger derivative $\frac{\partial}{\partial \bar z}$; in particular,  $\bar D$ and its complex conjugate
\begin{equation} 
D=\alpha_+\delta_x+\alpha_-\delta_y+\dfrac{1}{2}\delta_x\delta_y
\end{equation}
satisfy
$$D\bar D=\bar D D=\Delta,$$
where $\Delta$ is a discrete Laplacian (corresponding to a diagonal lattice).  

We  define the derivative of a discrete analytic function $f(z)$ to be $-\frac{i}{2}{D}f$.

\begin{lemma}
Assume that $f:\Lambda\longrightarrow\mathcal{V}$ is a discrete analytic function.  Its derivative is given by
\begin{equation}
\label{dafdelta}
   -\frac{i}{2} {D}f=\frac{1}{2}(\delta_x-\delta_y)f
    \end{equation}
\end{lemma}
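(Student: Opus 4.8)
The plan is to start from the discrete analyticity hypothesis in the form $\bar D f=0$ and use it to eliminate the mixed second-order term $\tfrac12\delta_x\delta_y f$ from the expression for $Df$. Concretely, from the definition of $\bar D$ in \eqref{dirac} the relation $\bar D f=0$ reads $\alpha_-\delta_x f+\alpha_+\delta_y f+\tfrac12\delta_x\delta_y f=0$, so $\tfrac12\delta_x\delta_y f=-\alpha_-\delta_x f-\alpha_+\delta_y f$.

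Next I would substitute this identity into the definition of $Df=\alpha_+\delta_x f+\alpha_-\delta_y f+\tfrac12\delta_x\delta_y f$, which collapses the expression to $(\alpha_+-\alpha_-)\delta_x f+(\alpha_--\alpha_+)\delta_y f=(\alpha_+-\alpha_-)(\delta_x-\delta_y)f$. Since $\alpha_+-\alpha_-=\tfrac{1+i}{2}-\tfrac{1-i}{2}=i$, this gives $Df=i(\delta_x-\delta_y)f$, and multiplying by $-\tfrac{i}{2}$ yields $-\tfrac{i}{2}Df=\tfrac12(\delta_x-\delta_y)f$, which is \eqref{dafdelta}.

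The computation is elementary substitution and bookkeeping of the constants $\alpha_\pm$; there is essentially no obstacle beyond keeping track of signs, the one point worth isolating being the evaluation $\alpha_+-\alpha_-=i$ that converts the factor $-\tfrac{i}{2}\cdot i$ into $\tfrac12$. Note also that the mixed-derivative terms in $D$ and $\bar D$ coincide, which is exactly what makes the elimination clean; one could alternatively write $D=\bar D+(\alpha_+-\alpha_-)(\delta_x-\delta_y)$ and read off the result immediately on the kernel of $\bar D$.
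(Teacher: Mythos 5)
Your proof is correct and is essentially the paper's argument: the paper simply writes $D-\bar D=(\alpha_+-\alpha_-)(\delta_x-\delta_y)=i(\delta_x-\delta_y)$ and applies it on $\ker\bar D$, which is exactly the alternative you note in your last sentence. Your substitution of $\tfrac12\delta_x\delta_y f=-\alpha_-\delta_x f-\alpha_+\delta_y f$ into $Df$ is just that same identity written out term by term.
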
  
\begin{proof}
  The result follows from
  \begin{equation}
{D}-\bar D=(\alpha_+-\alpha_-)(\delta_x-\delta_y)=i(\delta_x-\delta_y).
\end{equation} 
\end{proof}

We shall
denote by $\mathbf{H}(\Lambda,\mathcal{V})$  the space of all discrete analytic functions $f:\Lambda\longrightarrow\mathcal{V};$
$$\mathbf{H}(\Lambda,\mathcal{V})=\ker(\bar D).$$
The following observation turns out to be quite useful.
\begin{proposition} \label{propfour}
The operators $I+\delta_x,$ $I+\delta_y,$ $I+\alpha_\pm\delta_x,$ $I+\alpha_\pm \delta_y$ are invertible on $\mathbf{H}(\Lambda,\mathcal{V});$
\begin{gather}\label{four1}
I+\alpha_+\delta_y=(I+\alpha_-\delta_x)^{-1};\\
I+\alpha_-\delta_y=(I+\delta_x)(I+\alpha_-\delta_x)^{-1};\\
I+\alpha_+\delta_x=(I+\delta_y)(I+\alpha_+\delta_y)^{-1}.\label{four3}
\end{gather}
\end{proposition}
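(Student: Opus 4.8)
The plan is to reduce the whole statement to one factorization identity, after disposing of $I+\delta_x$ and $I+\delta_y$ by a triviality. Since $\delta_x$ and $\delta_y$ are commuting, constant-coefficient difference operators, they commute with $\bar D$; the same holds for the backward shifts $f(z)\mapsto f(z-1)$ and $f(z)\mapsto f(z-i)$. Hence all of these operators map $\mathbf H(\Lambda,\mathcal V)=\ker\bar D$ into itself, so it makes sense to speak of their invertibility there. Moreover $(I+\delta_x)f(z)=f(z+1)$ and $(I+\delta_y)f(z)=f(z+i)$, so $I+\delta_x$ and $I+\delta_y$ already act on $\mathbf H(\Lambda,\mathcal V)$ as invertible operators, their inverses being the backward shifts just mentioned.

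The key step is the identity
$$(I+\alpha_-\delta_x)(I+\alpha_+\delta_y)=(I+\alpha_+\delta_y)(I+\alpha_-\delta_x)=I+\alpha_-\delta_x+\alpha_+\delta_y+\tfrac12\delta_x\delta_y=I+\bar D,$$
which uses only $\delta_x\delta_y=\delta_y\delta_x$ and $\alpha_+\alpha_-=\tfrac12$. Restricting to $\ker\bar D$ collapses the left-hand side to $I$, so on $\mathbf H(\Lambda,\mathcal V)$ the operators $I+\alpha_-\delta_x$ and $I+\alpha_+\delta_y$ are mutually inverse; this is precisely \eqref{four1}, and in particular both are invertible.

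For the remaining two formulas I would use \eqref{four1} to rewrite $(I+\alpha_-\delta_x)^{-1}$ and $(I+\alpha_+\delta_y)^{-1}$ as $I+\alpha_+\delta_y$ and $I+\alpha_-\delta_x$, so that it suffices to check, on $\mathbf H(\Lambda,\mathcal V)$,
$$(I+\delta_x)(I+\alpha_+\delta_y)=I+\alpha_-\delta_y,\qquad (I+\delta_y)(I+\alpha_-\delta_x)=I+\alpha_+\delta_x.$$
Each follows by expanding the product and eliminating the second-order term through the defining relation, written as $\delta_x\delta_y f=-2\alpha_-\delta_xf-2\alpha_+\delta_yf$; the coefficients then collapse because $\alpha_\pm^{2}=\pm\tfrac i2$ and $\alpha_+-\alpha_-=i$. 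Since each left-hand side is a composition of operators already shown to be invertible on $\mathbf H(\Lambda,\mathcal V)$, the operators $I+\alpha_-\delta_y$ and $I+\alpha_+\delta_x$ are invertible there as well. The computations are all short; the only point requiring care is that the substitution $\delta_x\delta_y\mapsto-2\alpha_-\delta_x-2\alpha_+\delta_y$ is legitimate only on $\ker\bar D$, so one must keep track that every identity past the first factorization is an identity of operators restricted to $\mathbf H(\Lambda,\mathcal V)$, and not on arbitrary $\mathcal V$-valued functions on $\Lambda$, where it would fail.
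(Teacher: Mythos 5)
Your proposal is correct and follows essentially the same route as the paper: dispose of $I+\delta_x$ and $I+\delta_y$ as translations, then use the factorization $(I+\alpha_-\delta_x)(I+\alpha_+\delta_y)=I+\bar D$ together with two further product identities that hold modulo $\bar D$. The paper merely writes those last identities with the explicit $\bar D$ remainder (e.g.\ $(I+\alpha_-\delta_x)(I+\alpha_-\delta_y)=I+\delta_x-i\bar D$) instead of substituting $\delta_x\delta_y\mapsto-2\alpha_-\delta_x-2\alpha_+\delta_y$ on $\ker\bar D$ as you do, which is an equivalent bookkeeping choice.
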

\begin{proof} Since $\delta_x$ and $\delta_y$ commute (and, therefore,  commute with $\bar D$),  the invertibility of the translation operators $I+\delta_x$ and $I+\delta_y$  on $\mathbf{H}(\Lambda,\mathcal{V})$ is immediate. The rest follows from the identities:
\label{four}
\begin{gather*}
(I+\alpha_-\delta_x)(I+\alpha_+\delta_y)=I+\bar D;\\
(I+\alpha_-\delta_x)(I+\alpha_-\delta_y)=I+\delta_x-i\bar D;\\
(I+\alpha_+\delta_x)(I+\alpha_+\delta_y)=I+\delta_y+i\bar D.
\end{gather*}
\end{proof}

Next,  we turn to eigenfunctions of the difference operators $\delta_x$ and $\delta_y.$

\begin{proposition} \label{eigenprop} The set of complex eigenvalues of $\delta_x$,  as a linear operator on $\mathbf{H}(\Lambda,\CC)$,
is $\CC\setminus\{-1,-2\alpha_\pm\}.$ Each eigenspace of $\delta_x$ is one-dimensional; the eigenspace corresponding to an eigenvalue $\lambda$ is spanned by the function
\begin{equation}\label{genela}e_\lambda(z)=(1+\lambda)^{\Re(z)}\left(\dfrac{1+\alpha_+\lambda}{1+\alpha_-\lambda}\right)^{\Im(z)}.\end{equation}
The set of complex eigenvalues of $\delta_y$, as a linear operator on $\mathbf{H}(\Lambda,\CC)$, is the same as for $\delta_x;$ each eigenspace of $\delta_y$ is one-dimensional;  the eigenspace corresponding to an eigenvalue $\mu$ is spanned by the function
$ e_\lambda(z),$
where
$$\lambda=-\dfrac{i\mu}{1+\alpha_+\mu}.$$
\end{proposition}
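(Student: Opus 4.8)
The plan is to solve the eigenvalue equation $\delta_x f = \lambda f$ directly on $\mathbf{H}(\Lambda,\CC)$, exploiting that $\delta_x$ and $\delta_y$ commute. First I would look for eigenfunctions in the separated (exponential) form $f(z) = a^{\Re(z)} b^{\Im(z)}$ with $a,b \in \CC$. For such a function, $\delta_x f = (a-1) f$ and $\delta_y f = (b-1) f$, so $f$ is automatically a joint eigenfunction of both difference operators; the only real constraint is discrete analyticity. Substituting $f$ into $\bar D f = 0$, using $\bar D = \alpha_-\delta_x + \alpha_+\delta_y + \tfrac12 \delta_x\delta_y$, gives the scalar relation
\begin{equation*}
\alpha_-(a-1) + \alpha_+(b-1) + \tfrac12 (a-1)(b-1) = 0,
\end{equation*}
which one rearranges (using $\alpha_+ + \alpha_- = 1$, $\alpha_+ - \alpha_- = i$) into $(1+\alpha_- a)(1+\alpha_+ b)$-type factored form, solving for $b$ as a Möbius function of $a$. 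Setting $a = 1+\lambda$ then yields exactly $b = \tfrac{1+\alpha_+\lambda}{1+\alpha_-\lambda}$, i.e.\ the claimed $e_\lambda$. This also reads off the forbidden values: $b$ is undefined when $1+\alpha_-\lambda = 0$, i.e.\ $\lambda = -2\alpha_+$ (note $1/\alpha_- = 2\alpha_+$ since $\alpha_+\alpha_- = 1/2$), undefined-by-symmetry at $\lambda = -2\alpha_-$, and $\lambda = -1$ forces $a = 0$, making $f$ not well-defined on the whole lattice (or identically degenerate). Hence the candidate set of eigenvalues is $\CC \setminus \{-1, -2\alpha_\pm\}$.

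**One-dimensionality and exhaustiveness.** The subtler point is showing there are \emph{no other} eigenfunctions: a priori $\ker(\delta_x - \lambda)$ inside $\mathbf{H}(\Lambda,\CC)$ could be larger than the span of $e_\lambda$. Here is where I would use the structure of $\Lambda = \ZZ + i\ZZ$. Fix $\lambda \neq -1$. The equation $\delta_x f = \lambda f$ says $f(z+1) = (1+\lambda) f(z)$, so $f$ is determined along every horizontal line by its value at one point; concretely $f(m+in) = (1+\lambda)^m g(n)$ for some function $g:\ZZ\to\CC$. Plugging this into the discrete-analyticity equation \eqref{ferrand} (or $\bar D f = 0$) produces a first-order linear recurrence for $g$: it forces $g(n+1) = \big(\tfrac{1+\alpha_+\lambda}{1+\alpha_-\lambda}\big) g(n)$, provided $1+\alpha_-\lambda \neq 0$. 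Since $\Im(z)$ ranges over all of $\ZZ$, this recurrence has a one-dimensional solution space, giving $g(n) = c\, b^n$ and hence $f = c\, e_\lambda$. This simultaneously proves that each eigenspace is exactly one-dimensional and that no eigenvalue outside the claimed set occurs (if $1+\alpha_-\lambda = 0$ the recurrence is inconsistent unless $g \equiv 0$; similarly for the excluded values).

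**The $\delta_y$ statement.** For $\delta_y$, the cleanest route is Proposition~\ref{propfour}: the identity $I + \alpha_+\delta_y = (I+\alpha_-\delta_x)^{-1}$ on $\mathbf{H}(\Lambda,\CC)$ lets one transfer eigen-information between the two operators. If $\delta_x e_\lambda = \lambda e_\lambda$, then $(I+\alpha_-\delta_x) e_\lambda = (1+\alpha_-\lambda) e_\lambda$, so $(I+\alpha_+\delta_y) e_\lambda = \tfrac{1}{1+\alpha_-\lambda} e_\lambda$, whence $\delta_y e_\lambda = \mu e_\lambda$ with $\alpha_+\mu = \tfrac{1}{1+\alpha_-\lambda} - 1 = \tfrac{-\alpha_-\lambda}{1+\alpha_-\lambda}$, i.e.\ $\mu = \tfrac{-\alpha_-\lambda/\alpha_+}{1+\alpha_-\lambda}$. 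Using $\alpha_-/\alpha_+ = \overline{\alpha_+}/\alpha_+ = -i$ (since $\alpha_\pm = \tfrac{1\pm i}{2}$) this is $\mu = \tfrac{i\lambda}{1+\alpha_-\lambda}$, and inverting the Möbius map gives $\lambda = -\tfrac{i\mu}{1+\alpha_+\mu}$, matching the statement. The eigenvalue set for $\delta_y$ is the image of $\CC\setminus\{-1,-2\alpha_\pm\}$ under $\lambda\mapsto\mu$, which one checks is again $\CC\setminus\{-1,-2\alpha_\pm\}$ (the excluded values are permuted among themselves), and one-dimensionality of the $\delta_y$-eigenspaces follows from that of the $\delta_x$-eigenspaces via the bijection.

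**Main obstacle.** The only genuinely non-routine step is the exhaustiveness/one-dimensionality argument in the second paragraph: one must be careful that reducing discrete analyticity to a recurrence in the $\Im(z)$-variable really does pin down $g$ uniquely up to scalar, and that the degenerate cases $\lambda \in \{-1, -2\alpha_\pm\}$ are correctly identified as the ones where the recurrence degenerates or the horizontal relation kills $f$. Everything else is bookkeeping with the constants $\alpha_\pm$.
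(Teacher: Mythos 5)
Your proposal is correct and follows essentially the same route as the paper: both arguments reduce the eigenvalue equation together with discrete analyticity to the two one-step recurrences $f(z+1)=(1+\lambda)f(z)$ and $f(z+i)=\tfrac{1+\alpha_+\lambda}{1+\alpha_-\lambda}f(z)$, which pin down $f$ up to a scalar and exhibit $\{-1,-2\alpha_\pm\}$ as exactly the degenerate values, with the $\delta_y$ case handled by the same M\"obius correspondence. The only cosmetic difference is that the paper obtains the vertical recurrence and the exclusion of the bad eigenvalues by invoking the operator identities and invertibility statements of Proposition \ref{propfour}, whereas you re-derive the same facts by substituting directly into $\bar D f=0$.
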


\begin{proof} Let $\lambda\in\CC,$ and let $e_\lambda(z)\not\equiv 0$ be a discrete analytic function, such that
$$(\delta_xe_\lambda)(z)=\lambda e_\lambda(z),\quad z\in\Lambda.$$
Then $$(I+t\delta_x)e_\lambda=(1+t\lambda)e_\lambda,\quad t\in\CC,$$ and Proposition \ref{propfour} implies that
$\lambda\not\in\{-1,-2\alpha_\pm\}.$  In particular, 
$$(I+\delta_x)e_\lambda=(1+\lambda)e_\lambda,$$
so
\begin{equation}\label{111}e_{\lambda}(z+1)=(1+\lambda)e_\lambda(z),\quad z\in\Lambda.\end{equation}
Equation \eqref{111} determines a discrete analytic function up to a multiplicative constant. Indeed,  in view of \eqref{four1} and \eqref{four3},
$$(I+\delta_y)e_\lambda=(I+\alpha_+\delta_x)(I+\alpha_-\delta_x)^{-1}e_\lambda=\dfrac{1+\alpha_+\lambda}{1+\alpha_-\lambda}e_\lambda,$$
hence
\begin{equation}\label{112}
e_\lambda(z+i)=\dfrac{1+\alpha_+\lambda}{1+\alpha_-\lambda}e_\lambda(z),\quad z\in\lambda.
\end{equation}
Combining \eqref{111} and \eqref{112} with $e_\lambda(0)=1$ leads to \eqref{genela}.

The analysis of eigenspaces of $\delta_y$ is similar.
\end{proof}
We note that, for every $z\in\Lambda,$ the mapping $\lambda\mapsto e_\lambda(z)$ is analytic in the open unit disk $\DD$.  It admits the Taylor expansion
\begin{equation}
e_\lambda(z)=  \sum_{n=0}^\infty \lambda^n z^{(n)},\quad \lambda\in\DD, z\in\Lambda.
  \label{gene}
\end{equation}
The discrete analytic functions $z^{(n)}$ are polynomials of degree $n$ in variables  $\Re(z)$ and $\Im(z),$ satisfying the relation
\begin{equation}\label{si3}\delta_x z^{(n)}=z^{(n-1)},\quad n\in\NN.\end{equation} In particular,
$$z^{(0)}=1\quad\text{and}\quad z^{(1)}=z.$$
Polynomials $z^{(n)}$ have previously appeared in \cite{alpay2021discrete},  where it was shown that they form a basis of discrete analytic polynomials,  and that they can be defined recursively by means of an operator $Z$ that acts on $\mathbf{H}(\Lambda,\CC):$
\begin{gather}\label{defz1}
z^{(n)}=Z^n 1;\\
  (Zf)(z)=\frac{f(0)-f(z)}{2}+\int_0^zf\delta z,\label{defz2}
  \end{gather}
  where the integral is independent of the chosen path in view of the
assumed discrete analyticity of $f(z).$ We note that  operator $Z$ can be defined by \eqref{defz2} on  discrete analytic functions with values in an arbitrary complex vector space $\mathcal V$, as well, and that, operators $Z$ and $\delta_x$ play the role of the  forward-shift and backward-shift operators in the open unit disk setting:
  \begin{gather}
    \label{structural-identity1}
    (Z\delta_x f)(z)=f(z)-f(0);\\
    \label{structural-identity2}
    (\delta_x Zf)(z)=f(z).
  \end{gather}

We close this section with a generalization of  the Chu-Vandermonde formula
$$\binom{z+w}{n}=\sum_{k=0}^n\binom{z}{k}\binom{w}{n-k},$$ where $z,w,n\in\ZZ_+$ and
it is understood that $\binom{z}{n}=0$ for $n>z.$ Note that for  $z\in\ZZ_+ $ the expansion \eqref{gene} implies that
$$z^{(n)}=\binom{z}{n}.$$
\begin{proposition}
  Let $z,w\in\Lambda$ and $n\in\ZZ_+.$  It holds that
  \begin{equation}
    \label{chu}
(z+w)^{(n)}=\sum_{k=0}^nz^{(k)}w^{(n-k)}
    \end{equation}
\end{proposition}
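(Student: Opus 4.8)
The plan is to derive \eqref{chu} from the generating function \eqref{gene} together with the explicit formula \eqref{genela}. The crucial point is that, for lattice points, the exponents $\Re(\cdot)$ and $\Im(\cdot)$ occurring in \eqref{genela} are additive, and these exponents are integers, so \eqref{genela} immediately yields the multiplicativity identity
\begin{equation}
e_\lambda(z+w)=e_\lambda(z)\,e_\lambda(w),\qquad z,w\in\Lambda,\ \lambda\in\DD.
\end{equation}
There is no issue with the denominator in \eqref{genela}: since $|\alpha_-|<1$, the factor $1+\alpha_-\lambda$ does not vanish for $\lambda\in\DD$, so $e_\lambda(z)$ is well defined there for every $z\in\Lambda$.

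Next I would expand both sides of this identity as power series in $\lambda$ on $\DD$. By \eqref{gene} the left-hand side is $\sum_{n=0}^\infty\lambda^n(z+w)^{(n)}$. For the right-hand side I would form the Cauchy product of $\sum_{k=0}^\infty\lambda^k z^{(k)}$ and $\sum_{j=0}^\infty\lambda^j w^{(j)}$; since $\lambda\mapsto e_\lambda(z)$ is analytic on $\DD$ for each fixed $z$ (as noted after \eqref{genela}), both series converge absolutely on $\DD$, the Cauchy product is legitimate, and it produces $\sum_{n=0}^\infty\lambda^n\sum_{k=0}^n z^{(k)}w^{(n-k)}$. Comparing the coefficients of $\lambda^n$ — which is valid because a convergent power series determines its coefficients — gives \eqref{chu}.

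The argument is essentially routine, so there is no serious obstacle; the only place that calls for a moment's care is the justification of the Cauchy product, i.e. the absolute convergence of the two generating series on $\DD$, which is exactly the analyticity statement recorded just after \eqref{genela}. If one preferred an argument not relying on generating functions, one could instead induct on $n$: the case $n=0$ is immediate, and applying $\delta_x$ in the variable $z$ to both sides of \eqref{chu}, using \eqref{si3} together with the inductive hypothesis, shows that the difference of the two sides lies in $\ker\delta_x$ and is therefore constant in $z$ by \eqref{structural-identity1}; evaluating at $z=0$ and using $0^{(k)}=\delta_{k,0}$ (which follows from the normalization $e_\lambda(0)=1$) completes the proof.
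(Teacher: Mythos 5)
Your proof is correct, but it takes a genuinely different route from the paper's. You exploit the closed form \eqref{genela} to get the multiplicativity $e_\lambda(z+w)=e_\lambda(z)e_\lambda(w)$ (valid because $\Re$ and $\Im$ are additive integer exponents on $\Lambda$ and the factors $1+\lambda$, $1+\alpha_\pm\lambda$ do not vanish on $\DD$), and then read off \eqref{chu} as the Cauchy-product identity for the Taylor coefficients; in effect you observe that \eqref{chu} is precisely the statement that $e_\lambda$ is a character of the additive group $\Lambda$. The paper instead fixes $w$, shows $f(z)=(z+w)^{(n)}=(I+\delta_x)^u(I+\delta_y)^v z^{(n)}$ is discrete analytic with $\delta_x^{n+1}f\equiv 0$, invokes the discrete Taylor expansion (Proposition \ref{taylorthm}) to conclude $f$ is a discrete analytic polynomial of degree at most $n$, and computes its coefficients $(\delta_x^k f)(0)=w^{(n-k)}$ --- the discrete analogue of proving the binomial theorem by differentiation. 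Your argument is shorter and works verbatim for all $z,w\in\Lambda$ (matching the statement), since \eqref{gene} is asserted on all of $\Lambda$, whereas the paper's proof leans on machinery stated for $\Lambda_+$; the paper's argument buys the extra structural information that $(z+w)^{(n)}$, as a function of $z$, is itself a discrete analytic polynomial with explicit Taylor data, which is in the spirit of the backward-shift techniques used throughout Section \ref{rationalsec}. Your alternative induction via $\delta_x$ and \eqref{structural-identity1} is also sound and is closer in flavor to the paper's proof.
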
  
\begin{proof}
Let $w=u+vi\in \Lambda$ be fixed.  Consider function $f:\Lambda\longrightarrow\CC$ given by
$$f(z)=(z+w)^{(n)}.$$
Then 
$$f(z)=(I+\delta_x)^u(I+\delta_y)^v z^{(n)};$$
in particular, $f(z)$ is discrete analytic.  In view of identity \eqref{si3},  
$$(\delta_x^{n+1} f)\equiv 0,$$
hence, by Proposition \ref{taylorthm}, $f(z)$ admits the power series expansion \eqref{taylor} and is, in fact,  a discrete analytic polynomial of degree at most $n.$
Finally,  for $0\leq k\leq n,$
$$(\delta_x^kf)(z)=(I+\delta_x)^u(I+\delta_y)^v z^{(n-k)}=(z+w)^{(n-k)},$$
and it only remains to set $z=0.$
\end{proof}

\section{Hardy space, Schur functions and convolution product}
\label{reviewsec}
In this section we review the discrete analytic counterparts of the Hardy space and Schur class, introduced in  \cite{alpay2021discrete}.   Although in that work the discrete analytic Schur functions were assumed to have complex values,  there is little difficulty involved in adapting the appropriate results to the case of functions with values that are bounded linear operators between Hilbert spaces.\smallskip

We begin with the
observation that for every  $z$ in the right half-lattice
$$\Lambda_+=\ZZ_++i\ZZ$$  the generating function $e_\lambda(z)$ of the basis $z^{(n)}$  (see \eqref{genela}, \eqref{gene}) is analytic, as a function of $\lambda,$ in the disk $\{|\lambda|<\sqrt{2}\}.$ Hence
\begin{equation}\label{hadamard}\limsup_{n\rightarrow\infty}\sqrt[n]{|z^{(n)}|}=\dfrac{1}{\sqrt{2}},\quad z\in\Lambda_+,\end{equation}
and the series
$$K(z,w)=\sum_{n=0}^\infty z^{(n)}{\bar w}^{(n)}$$
defines a positive\footnote{Note that ${\bar w}^{(n)}=\overline{w^{(n)}}$.} kernel in $\Lambda_+.$ It is for this reason that we shall consider in the sequel the space $\HH(\Lambda_+,\mathcal{V})$ of functions $f:\Lambda_+\longrightarrow\mathcal{V}$ that are discrete analytic on the right half-lattice $\Lambda_+,$ 
rather than on the whole lattice $\Lambda.$ The conclusions of the previous section still hold, with one notable exception (see Proposition \ref{propfour}): on the space
$\mathbf{H}(\Lambda_+,\CC)$ the operators $I+\delta_x$ and $I+\alpha_-\delta_y$ are {\em not} invertible; $\lambda=-1$ is an eigenvalue, of $\delta_x,$  and the corresponding eigenfunction is
$$e_{-1}(z)=\left\{\begin{aligned}(-i)^{\Im(z)},&\quad\text{if }\Re(z)=0,\\0,&\quad\text{if }\Re(z)>0.\end{aligned}
\right.$$
\begin{proposition} \label{propfourmod}
The operators  $I+\delta_y,$ $I+\alpha_+\delta_y$,  $I+\alpha_\pm\delta_x$ are invertible on $\mathbf{H}(\Lambda_+,\mathcal{V});$
\begin{gather}\label{mod1}
I+\alpha_+\delta_y=(I+\alpha_-\delta_x)^{-1};\\
I+\alpha_+\delta_x=(I+\delta_y)(I+\alpha_+\delta_y)^{-1}.\label{mod2}
\end{gather}
\end{proposition}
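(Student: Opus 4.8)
The plan is to follow the proof of Proposition~\ref{propfour} as closely as possible; the only substantive change is that the invertibility of $I+\delta_x$ on $\mathbf{H}(\Lambda,\mathcal{V})$ used there, and now unavailable, must be replaced by the invertibility of $I+\delta_y$. First I would record the two facts that survive the passage to the half-lattice without change: $\delta_x$ and $\delta_y$ are operators on $\mathbf{H}(\Lambda_+,\mathcal{V})$ (they involve only the forward translations $z\mapsto z+1$ and $z\mapsto z+i$, neither of which decreases $\Re(z)$, and they commute with $\bar D$, hence preserve $\ker\bar D$), and they commute with each other on $\mathbf{H}(\Lambda_+,\mathcal{V})$. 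In particular every operator occurring in the statement acts on $\mathbf{H}(\Lambda_+,\mathcal{V})$.

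The one genuinely new point is the invertibility of $I+\delta_y$ on $\mathbf{H}(\Lambda_+,\mathcal{V})$. Here I would simply observe that $(I+\delta_y)f=f(\cdot+i)$, so the equation $(I+\delta_y)f=g$ has the unique candidate solution $f=g(\cdot-i)$, and that $g(\cdot-i)$ again lies in $\mathbf{H}(\Lambda_+,\mathcal{V})$: for $z\in\Lambda_+$ one has $z-i\in\Lambda_+$ (the real part is unchanged), and the discrete analyticity relation~\eqref{ferrand} for $g(\cdot-i)$ at $z$ is precisely that relation for $g$ at $z-i\in\Lambda_+$. Hence $I+\delta_y$ is a bijection of $\mathbf{H}(\Lambda_+,\mathcal{V})$. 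This is exactly where the loss of symmetry between the two directions shows up: the same argument for $I+\delta_x$ would require $z-1\in\Lambda_+$, which can fail, consistent with the fact (already noted before the statement) that $I+\delta_x$ is no longer invertible.

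It then remains to feed this into the operator identities already verified in the proof of Proposition~\ref{propfour}, which hold verbatim on $\mathbf{H}(\Lambda_+,\mathcal{V})$:
\begin{gather*}
(I+\alpha_-\delta_x)(I+\alpha_+\delta_y)=I+\bar D,\\
(I+\alpha_+\delta_x)(I+\alpha_+\delta_y)=I+\delta_y+i\bar D.
\end{gather*}
Since $\bar D=0$ on $\mathbf{H}(\Lambda_+,\mathcal{V})$, the first identity reads $(I+\alpha_-\delta_x)(I+\alpha_+\delta_y)=I$; combined with the commutativity of $\delta_x$ and $\delta_y$ it also gives $(I+\alpha_+\delta_y)(I+\alpha_-\delta_x)=I$, so $I+\alpha_-\delta_x$ and $I+\alpha_+\delta_y$ are two-sided inverses of one another — this is~\eqref{mod1}, and in particular both are invertible. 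The second identity becomes $(I+\alpha_+\delta_x)(I+\alpha_+\delta_y)=I+\delta_y$; multiplying on the right by $(I+\alpha_+\delta_y)^{-1}$ yields $I+\alpha_+\delta_x=(I+\delta_y)(I+\alpha_+\delta_y)^{-1}$, which is~\eqref{mod2} and exhibits $I+\alpha_+\delta_x$ as a composition of invertible operators. This covers all four operators named in the statement.

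The main obstacle here is not computational — the algebraic identities are already in hand — but conceptual: one must notice that the self-contained argument of Proposition~\ref{propfour} cannot be transcribed because restricting to $\Lambda_+$ destroys the $x$/$y$ symmetry, and must single out $I+\delta_y$ (rather than $I+\delta_x$) as the invertible operator from which the rest is bootstrapped.
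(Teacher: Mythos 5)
Your proof is correct, and it supplies exactly the argument the paper leaves implicit (Proposition~\ref{propfourmod} is stated without proof there): the key new ingredient is the invertibility of $I+\delta_y$ via the backward translation $z\mapsto z-i$, which preserves $\Lambda_+$, after which the operator identities from the proof of Proposition~\ref{propfour}, restricted to $\ker\bar D$, yield \eqref{mod1}, \eqref{mod2} and the invertibility of the remaining operators. Your closing observation that $I+\delta_x$ cannot play this role on the half-lattice is consistent with the paper's remark, just before the proposition, that $\lambda=-1$ is an eigenvalue of $\delta_x$ on $\mathbf{H}(\Lambda_+,\CC)$.
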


Polynomials $z^{(n)}$ play the role of the powers of $z$ in power series expansions of discrete analytic functions.  The class of discrete analytic functions that admit such an expansion can be characterized as follows.
\begin{proposition}\label{taylorthm} Let $\mathcal{V}$ be a normed vector space.
  A function $f\in\mathbf{H}(\Lambda_+,\mathcal{V})$  admits a power series expansion in terms of $z^{(n)}$ if,  and only if, 
  \[
\forall z\in\Lambda_+\quad \lim_{n\rightarrow\infty} (Z^n\delta_x^nf)(z)=0.
  \]
 Such an expansion is unique:
\begin{equation}\label{taylor}f(z)=\sum_{n=0}^\infty (\delta_x^n f)(0)z^{(n)},\quad z\in\Lambda_+.\end{equation}
\end{proposition}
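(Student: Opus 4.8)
The plan is to reduce everything to a single finite Taylor formula with remainder: for every $f\in\mathbf{H}(\Lambda_+,\mathcal{V})$, every $z\in\Lambda_+$ and every $N\in\NN$,
\[
f(z)=\sum_{n=0}^{N-1}(\delta_x^nf)(0)\,z^{(n)}+(Z^N\delta_x^Nf)(z).
\]
For $N=1$ this is precisely the structural identity \eqref{structural-identity1}, since $z^{(0)}=1$. For the inductive step I would apply \eqref{structural-identity1} to the discrete analytic function $g:=\delta_x^Nf$, which gives $g=g(0)\cdot 1+Z(\delta_x g)$ as functions on $\Lambda_+$; applying the linear operator $Z^N$ and invoking \eqref{defz1} in the form $Z^N1=z^{(N)}$ turns this into
\[
Z^N\delta_x^Nf=(\delta_x^Nf)(0)\,z^{(N)}+Z^{N+1}\delta_x^{N+1}f,
\]
and substituting this into the formula at level $N$ yields the formula at level $N+1$.

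Granted the remainder formula, the asserted equivalence is immediate. Fixing $z\in\Lambda_+$, the partial sums $\sum_{n=0}^{N-1}(\delta_x^nf)(0)\,z^{(n)}$ converge to $f(z)$ in $\mathcal{V}$ if and only if the remainder $(Z^N\delta_x^Nf)(z)$ tends to $0$ as $N\to\infty$; hence $f$ equals the series in \eqref{taylor} pointwise if and only if $\lim_{n\to\infty}(Z^n\delta_x^nf)(z)=0$ for every $z\in\Lambda_+$.

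It remains to check that \eqref{taylor} is the only possible power series expansion. For this I would take an arbitrary pointwise convergent expansion $f(z)=\sum_{n\ge0}a_nz^{(n)}$, valid for all $z\in\Lambda_+$, and apply $\delta_x$ term by term: since $\delta_x$ involves only the two values $f(z)$ and $f(z+1)$ and both $\sum a_nz^{(n)}$ and $\sum a_n(z+1)^{(n)}$ converge, \eqref{si3} gives $(\delta_xf)(z)=\sum_{n\ge1}a_nz^{(n-1)}$, and inductively $(\delta_x^kf)(z)=\sum_{m\ge0}a_{m+k}z^{(m)}$. Evaluating at $z=0$ and using $z^{(0)}(0)=1$ together with $z^{(m)}(0)=0$ for $m\ge1$ --- which follows from \eqref{defz2}, as $(Zh)(0)=0$ for every discrete analytic $h$ --- gives $a_k=(\delta_x^kf)(0)$. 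Thus any power series expansion must be \eqref{taylor}, and combining this with the previous paragraph completes the proof.

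I do not anticipate a genuine obstacle here: the substantive content is the remainder identity, which is a short induction off \eqref{structural-identity1}. The only points that need a little care are bookkeeping ones --- confirming that $\delta_x$ and $Z$ carry $\mathbf{H}(\Lambda_+,\mathcal{V})$ into itself, so that the relevant identities of Section \ref{eigensec} are still available on the half-lattice, and agreeing that a power series expansion is meant in the sense of pointwise convergence in the norm of $\mathcal{V}$; the potentially delicate step, term-by-term application of $\delta_x$ to a merely pointwise convergent series, is harmless because $\delta_x$ is a finite difference.
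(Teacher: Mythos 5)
Your proposal is correct and follows essentially the same route as the paper: the finite Taylor formula with remainder $f(z)=\sum_{n=0}^{N-1}(\delta_x^nf)(0)z^{(n)}+(Z^N\delta_x^Nf)(z)$, obtained by iterating \eqref{structural-identity1} and \eqref{structural-identity2}, with uniqueness deduced from \eqref{si3} and the vanishing of $z^{(n)}$ at $0$ for $n\geq 1$. You merely spell out the induction and the term-by-term differencing in more detail than the paper does.
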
  

\begin{proof} The uniqueness of the expansion follows from the identity \eqref{si3} and the fact that,  in view of \eqref{defz1} -- \eqref{defz2},  $$0^{(0)}=1\text{ and }
0^{(n)}=0,\quad n\geq 1.$$ The rest is based on the observation that, according to
the  identities  \eqref{structural-identity1} and \eqref{structural-identity2},
  \[
    \begin{split}
      f(z)&=f(0)+(Z\delta_xf)(x)\\
      &=f(0)+(\delta_xf)(0)z^{(1)}+(Z^2\delta_x^2f)(z)\\
      &=f(0)+(\delta_xf)(0)z^{(1)}+(\delta_x^2f)(0)z^{(2)}+(Z^3\delta_x^3f)(z)\\
      &\hspace{2.5mm}\vdots\\
      &=\sum_{n=0}^{N-1} (\delta_x^nf)(0)z^{(n)}+(Z^N\delta_x^Nf)(z),
      \end{split}
    \]
    and hence the result by letting $N\rightarrow\infty$.
 \end{proof} 

\begin{lemma}\label{zzero} Let  $\mathcal{V}$ be a normed vector space,  and let $f\in\mathbf{H}(\Lambda_+,\mathcal{V}).$ Then 
$$\forall z\in\Lambda_+\quad \lim_{n\rightarrow\infty}(Z^nf)(z)=0.$$
\end{lemma}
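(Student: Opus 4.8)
The plan is to peel the statement down to a finite-dimensional linear-algebra fact: a suitable truncation of the operator that $Z$ induces on sequences along the imaginary axis is a triangular matrix all of whose eigenvalues have modulus $1/\sqrt2<1$, hence its powers tend to $0$. \emph{First, reduce to the imaginary axis.} Since $Z$ maps $\mathbf{H}(\Lambda_+,\mathcal V)$ into itself, applying the structural identity \eqref{structural-identity2} to $Z^{n-1}f$ gives $\delta_x Z^n f=Z^{n-1}f$, i.e.
\[(Z^nf)(z+1)=(Z^nf)(z)+(Z^{n-1}f)(z),\qquad z\in\Lambda_+.\]
Writing $z=m+ik$ with $m\in\ZZ_+$, $k\in\ZZ$, this reads $(Z^nf)(m+ik)=(Z^nf)((m-1)+ik)+(Z^{n-1}f)((m-1)+ik)$ for $m\ge1$, so an induction on $m$ shows it is enough to prove that $\lim_{n\to\infty}(Z^nf)(ik)=0$ for every $k\in\ZZ$.

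\emph{Second, pass to the induced operator on sequences.} By \eqref{defz2} together with path independence of the discrete integral, for any $g\in\mathbf{H}(\Lambda_+,\mathcal V)$ the values $\{(Zg)(ik)\}_{k\in\ZZ}$ depend only on $\{g(ik)\}_{k\in\ZZ}$: taking the integration path along the imaginary axis and using \eqref{di}, with $a_k:=g(ik)$,
\[(Zg)(0)=0,\qquad (Zg)(ik)=\alpha_+a_0-\alpha_-a_k+i\sum_{j=1}^{k-1}a_j\quad(k\ge1),\]
and an analogous formula with $\alpha_+$ and $\alpha_-$ interchanged (and the sum running from $-1$ down to $k+1$) for $k\le-1$. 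Thus restriction to the imaginary axis intertwines $Z$ with a fixed linear rule $T$, and since each $Z^{n}f$ is again discrete analytic, the sequence $a^{(n)}:=\{(Z^nf)(ik)\}_k$ satisfies $a^{(n)}=Ta^{(n-1)}$, hence $a^{(n)}=T^na^{(0)}$. So the claim becomes: $(T^na^{(0)})_k\to0$ for each $k$ — with \emph{no} growth restriction whatsoever on $a^{(0)}=\{f(ik)\}_k$.

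\emph{Third, exploit triangularity.} The crucial observation is that for $k\ge0$ the formula for $(Ta)_k$ involves only $a_0,\dots,a_k$ (and for $k\le0$ only $a_0,a_{-1},\dots,a_k$). Hence, fixing $k\ge0$ and letting $P_k\colon\mathcal V^{\ZZ}\to\mathcal V^{k+1}$ be the truncation to coordinates $0,\dots,k$, one has $P_kT=S_kP_k$ for a genuine $(k+1)\times(k+1)$ matrix $S_k$, so that $P_kT^n=S_k^nP_k$ and $(T^na^{(0)})_k=(S_k^nP_ka^{(0)})_k$. Reading the displayed formula off, $S_k$ is lower triangular with diagonal entries $0,-\alpha_-,\dots,-\alpha_-$; since $|\alpha_-|=|1-i|/2=1/\sqrt2<1$ its spectral radius is $<1$, whence $S_k^n\to0$ entrywise and therefore $(T^na^{(0)})_k\to0$. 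The case $k<0$ is identical with $\alpha_-$ replaced by $\alpha_+$, and $|\alpha_+|=1/\sqrt2$ as well. With the first step this proves the lemma.

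The one genuinely substantive point is the third step — recognizing the triangular structure with sub-unit spectral radius; the first two steps are just the reduction that brings it into view. The detail one must not skip over is that the truncated maps $S_k$ are \emph{honest finite matrices} (the coefficient of $a_j$ in $(Ta)_k$ vanishes as soon as $j$ exceeds $|k|$), so that $\rho(S_k)<1$ really does force $S_k^n\to0$ even when $S_k$ fails to be diagonalizable; this is also precisely what lets the argument dispense with any decay hypothesis on $f$ along the imaginary axis and handle $\mathcal V$-valued $f$ with the same bookkeeping.
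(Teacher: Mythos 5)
Your argument is correct, and it reaches the conclusion by a genuinely different route than the paper. The paper proceeds by induction on the lattice point: starting from $(Z^nf)(0)=0$, it propagates the limit from $z$ to $z+1$ via $(I+\delta_x)Z^n=Z^n+Z^{n-1}$ (the same recursion as your first step) and from $z$ to $z\pm i$ via the identities of Proposition \ref{propfourmod}, which express $(Z^nf)(z\pm i)$ as a convolution $\sum_{k}(-\alpha_\mp)^k\bigl((Z^{n-k}f)(z)+\alpha_\pm(Z^{n-k-1}f)(z)\bigr)$ of the null sequence $n\mapsto (Z^nf)(z)$ against a geometric, hence summable, sequence; the limit then follows because $|\alpha_\pm|=1/\sqrt2<1$. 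You instead collapse the entire imaginary direction into one piece of finite-dimensional linear algebra: the integral formula \eqref{di}--\eqref{defz2} shows that $Z$ acts on imaginary-axis restrictions through a lower-triangular rule whose truncations to coordinates $0,\dots,k$ (resp.\ $k,\dots,0$) are honest finite matrices with diagonal entries $0$ and $-\alpha_-$ (resp.\ $-\alpha_+$), hence spectral radius $1/\sqrt2<1$, so their powers tend to $0$ regardless of Jordan structure. Both proofs ultimately rest on the same constant $|\alpha_\pm|=1/\sqrt2$, but they package it differently: the paper's version treats the increments $+1,\pm i$ uniformly once Proposition \ref{propfourmod} is available, while yours is more elementary and self-contained (no operator inverses, only the definition of $Z$ and powers of a fixed triangular matrix) and makes completely transparent why no growth or decay hypothesis on $f$ is needed. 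Two small remarks: for $k\le-1$ the off-diagonal sum in your formula for $(Zg)(ik)$ carries a factor $-i$ rather than $+i$, which is immaterial since only the diagonal entries enter the spectral-radius estimate; and your reduction to the imaginary axis tacitly uses that $Z$ preserves $\mathbf{H}(\Lambda_+,\mathcal{V})$ and that discrete integrals in $\Lambda_+$ are path-independent, both of which the paper has already established.
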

\begin{proof}
The proof is by induction on $|z|.$ If $|z|=0,$ then $(Z^nf)(z)=0$ for any $n\in\NN.$ Let us  now fix
$z\in\Lambda_+$ and assume that
$$\lim_{n\rightarrow\infty}(Z^nf)(z)=0.$$
Taking into account  Proposition \ref{propfourmod} and identity \eqref{structural-identity2}, we observe that
\begin{gather*}
(Z^nf)(z+1)=((I+\delta_x)Z^nf)(z)=(Z^nf)(z)+(Z^{n-1}f)(z);\\
\begin{split}(Z^nf)(z+i)&=((I+\delta_y)Z^nf)(z)=((I+\alpha_+\delta_x)(I+\alpha_-\delta_x)^{-1}Z^nf)(z)\\&=\sum_{k=0}^n(-\alpha_-)^k((Z^{n-k}f)(z)+\alpha_+(Z^{n-k-1}f)(z)).
\end{split}\\
\begin{split}(Z^nf)(z-i)&=((I+\delta_y)^{-1}Z^nf)(z)=((I+\alpha_+\delta_x)^{-1}(I+\alpha_-\delta_x)Z^nf)(z)\\&=\sum_{k=0}^n(-\alpha_+)^k((Z^{n-k}f)(z)+\alpha_-(Z^{n-k-1}f)(z)).
\end{split}
\end{gather*}
Thus $(Z^nf)(z+1)$ tends to $0,$ and, since $|\alpha_\pm|<1,$ so do $(Z^nf)(z\pm i).$
\end{proof}

Given a Hilbert space $\mathfrak{H}, $ we consider the reproducing kernel Hilbert space  $\mathbf{H}_2(\Lambda_+,\mathfrak{H})$ with the reproducing kernel $K(z,w)I_{\mathfrak{H}}.$  It  can be characterized as the space of discrete analytic functions $$f:\Lambda_+\longrightarrow\mathfrak{H}$$ that  admit power series expansions in terms of $z^{(n)}$ with square-summable coefficients:
\begin{gather*}
f(z)=\sum_{n=0}^\infty \hat f(n)z^{(n)},\quad z\in\Lambda_+;\\
\|f\|_2^2:=\sum_{n=0}^\infty\|\hat f(n)\|_{\mathfrak{H}}^2<\infty.
\end{gather*}

The space $\mathbf{H}_2(\Lambda_+,\mathfrak{H})$ is the discrete analytic  counterpart of the classical Hardy space of the open unit disk; in particular, on this Hilbert space the forward-shift operator $Z$ is adjoint to the backward-shift operator $\delta_x.$ The discrete analytic counterpart of the Schur class is defined as follows.

\begin{definition} Let $\mathfrak{H}_1$ and $\mathfrak{H}_2$ be Hilbert spaces.  A $\mathbf L(\mathfrak{H}_1,\mathfrak{H}_2)$-valued function $S(z),$  discrete analytic in $\Lambda_+,$ is said to be a
  discrete analytic Schur function if 
$$\sum_{n=0}^\infty \|(Z^nS)(z)\|^2_{\mathfrak{H}_2}<\infty,\quad z\in\Lambda,$$ and the kernel
$$K^S(z,w)=\sum_{n=0}^\infty z^{(n)}{\bar w}^{(n)}I_{\mathfrak{H}_2}-(Z^n S)(z)(Z^nS)(w)^{*}$$
is positive in $\Lambda_+$.  The corresponding reproducing kernel Hilbert space is called the de Branges - Rovnyak Hilbert space associated with $S(z);$ it will be denoted by $\mathcal{H}(S).$
\end{definition}

Discrete analytic Schur functions act on the Hardy space via a suitably defined convolution product.  The proof of the following theorem follows the proof of \cite[Theorem 5.2]{alpay2021discrete} and will be omitted.

\begin{theorem} Let $S(z)$ be a $\mathbf{L}(\mathfrak{H}_1,\mathfrak{H}_2)$-valued discrete analytic Schur function.  Then 
$S(z)$ admits a power series expansion
\begin{equation}\label{schtay}S(z)=\sum_{n=0}^\infty \hat S(n)z^{(n)},\quad z\in\Lambda_+,\end{equation}
and 
for every $f(z)$ in the Hardy space $\HH_2(\Lambda_+,\mathfrak{H}_1)$ the convolution product
$$(S\odot f)(z)=\sum_{n=0}^\infty\sum_{m=0}^n \hat S(m)\hat f(n-m)z^{(n)}$$
converges pointwise on $\Lambda_+$ to a discrete analytic function from the Hardy space $\HH_2(\Lambda_+,\mathfrak{H}_2)$.  Moreover,
 the linear operator
$$M_Sf=S\odot f$$
is a contraction from  $\HH_2(\Lambda_+,\mathfrak{H}_1)$ to $\HH_2(\Lambda_+,\mathfrak{H}_2)$. 

Conversely,  if $S:\Lambda_+\longrightarrow\mathbf{L}(\mathfrak{H}_1,\mathfrak{H}_2)$ is a discrete analytic function,  that admits a power series expansion \eqref{schtay} and is such that the multiplication operator $M_S$ is a contraction 
 from  $\HH_2(\Lambda_+,\mathfrak{H}_1)$ to $\HH_2(\Lambda_+,\mathfrak{H}_2)$, then $S(z)$ is a discrete analytic Schur function.
\end{theorem}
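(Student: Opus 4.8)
The plan is to realize the convolution operator $M_S$ as a single linear map $T$ between reproducing kernel Hilbert spaces and to read off all three assertions from the decomposition of $K^S$ into a difference of two positive kernels. For $f\in\HH_2(\Lambda_+,\mathfrak H_1)$ with power-series coefficients $\hat f(n)$, set $(Tf)(z):=\sum_{n\geq0}(Z^nS)(z)\hat f(n)$. Since $\sum_n\|(Z^nS)(z)\|^2<\infty$ (part of the definition of a discrete analytic Schur function) and $\sum_n\|\hat f(n)\|^2<\infty$, the Cauchy--Schwarz inequality makes this series converge at every $z\in\Lambda_+$, and because each $Z^nS$ is discrete analytic while \eqref{ferrand} is a finite linear relation, $Tf$ is discrete analytic. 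Viewing $T$ as a linear map of the Hilbert space $\HH_2(\Lambda_+,\mathfrak H_1)$ into $\mathfrak H_2$-valued functions on $\Lambda_+$ with bounded point evaluations, one checks in the standard way that $\ran(T)$, with the quotient norm, is the reproducing kernel Hilbert space $\mathcal H(K^{(2)})$ of $K^{(2)}(z,w):=\sum_n(Z^nS)(z)(Z^nS)(w)^*$ --- equivalently, using the reproducing property of $\HH_2(\Lambda_+,\mathfrak H_2)$, that $T^{*}\big(K(\cdot,w)\eta\big)=\sum_n(Z^nS)(w)^{*}\eta\,z^{(n)}$. Writing $K^{(1)}(z,w):=K(z,w)I_{\mathfrak H_2}$, the defining identity becomes $K^S=K^{(1)}-K^{(2)}$.

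\textbf{From Schur to multiplier.} Assume $S$ is a discrete analytic Schur function, so $K^S\succeq0$, i.e. $K^{(2)}\preceq K^{(1)}$. By the classical criterion (Aronszajn) for contractive containment of reproducing kernel Hilbert spaces, $\mathcal H(K^{(2)})=\ran(T)$ sits inside $\mathcal H(K^{(1)})=\HH_2(\Lambda_+,\mathfrak H_2)$ with norm non-increasing; since moreover $\|Tf\|_{\mathcal H(K^{(2)})}\leq\|f\|$, the map $T$ is a contraction from $\HH_2(\Lambda_+,\mathfrak H_1)$ to $\HH_2(\Lambda_+,\mathfrak H_2)$. Applying it to a constant function $\xi\in\mathfrak H_1$ gives $S(\cdot)\xi=(T\xi)(\cdot)\in\HH_2(\Lambda_+,\mathfrak H_2)$, hence $S(z)\xi=\sum_n(\delta_x^nS)(0)\xi\,z^{(n)}$ with square-summable coefficients; setting $\hat S(n):=(\delta_x^nS)(0)$ yields \eqref{schtay}, and $\sum_n\|\hat S(n)\xi\|^2=\|S\xi\|_{\HH_2}^2\leq\|\xi\|^2$ forces $\|\hat S(n)\|\leq1$ for every $n$ (so the expansion even converges in operator norm). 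With \eqref{schtay} in hand, $(Z^nS)(z)=\sum_k\hat S(k)z^{(n+k)}$; since $|z^{(m)}|$ decays geometrically in $m$ for $z\in\Lambda_+$ by \eqref{hadamard}, the double series $\sum_{n,k}\hat S(k)\hat f(n)z^{(n+k)}$ converges absolutely, and summation by total degree $m=n+k$ identifies $Tf$ with $S\odot f$. Thus $S\odot f$ converges pointwise on $\Lambda_+$ to the element $Tf\in\HH_2(\Lambda_+,\mathfrak H_2)$, and $M_S=T$ is a contraction.

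\textbf{From multiplier to Schur.} Conversely, let $S$ be discrete analytic, admit the expansion \eqref{schtay}, and have $M_S$ contractive. Testing $M_S$ on constants again gives $\|\hat S(n)\|\leq1$, so $\|(Z^nS)(z)\|=\big\|\sum_k\hat S(k)z^{(n+k)}\big\|\leq\sum_{m\geq n}|z^{(m)}|$ decays geometrically and $\sum_n\|(Z^nS)(z)\|^2<\infty$; in particular $T$ is defined, and the same rearrangement gives $T=M_S$. Since $M_S$ is a contraction, the inclusion $\mathcal H(K^{(2)})=\ran(T)\hookrightarrow\HH_2(\Lambda_+,\mathfrak H_2)=\mathcal H(K^{(1)})$ is contractive, so $K^{(1)}-K^{(2)}=K^S$ is a positive kernel; combined with the summability just established, $S$ is a discrete analytic Schur function.

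\textbf{Where the difficulty sits.} Everything hinges on the exact dictionary between the kernel identity $K^S=K^{(1)}-K^{(2)}$ and the operator $T$: the work is in verifying that $\ran(T)$ with its quotient norm really is $\mathcal H(K^{(2)})$ and that the contractive-containment criterion goes through verbatim for $\mathbf L(\mathfrak H_1,\mathfrak H_2)$-valued kernels, and in justifying the interchange of summations that turns $(Tf)(z)=\sum_n(Z^nS)(z)\hat f(n)$ into the degreewise convolution $S\odot f$ --- which is precisely where the a priori estimate $\|\hat S(n)\|\leq1$, extracted by testing on constants, and the geometric decay of $|z^{(m)}|$ on $\Lambda_+$ do the real work. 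The discrete analyticity of $Tf$ and the passage from the generating function $e_\lambda(z)$ to that geometric decay are routine given the results already established.
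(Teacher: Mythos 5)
The paper does not actually prove this theorem---it explicitly defers to Theorem 5.2 of the cited earlier work \cite{alpay2021discrete}---so there is no internal proof to compare against; your argument is correct and is precisely the standard one for such multiplier statements: realize $M_S$ through the bounded point maps $f\mapsto\sum_n(Z^nS)(z)\hat f(n)$, identify the range of this operator (with the quotient norm) with the reproducing kernel Hilbert space of $\sum_n(Z^nS)(z)(Z^nS)(w)^*$, and convert positivity of $K^S$ into contractive containment in $\HH_2(\Lambda_+,\mathfrak{H}_2)$ via Aronszajn. The steps specific to the discrete analytic setting---the geometric decay of $|z^{(n)}|$ from \eqref{hadamard}, the bound $\|\hat S(n)\|\le 1$ obtained by testing on constants, and the absolute convergence that justifies regrouping $\sum_{n,k}\hat S(k)\hat f(n)z^{(n+k)}$ by total degree to identify $Tf$ with $S\odot f$---are all handled correctly, so I see no gap.
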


\begin{proposition}\label{dbrrov} Let $S(z)$ be a $\mathbf{L}(\mathfrak{H}_1,\mathfrak{H}_2)$-valued discrete analytic Schur function.  The associated de Branges - Rovnyak space $\mathcal{H}(S)$ is the space
$$\ran\sqrt{I-M_SM_S^*},$$
equipped with the range norm.  In particular,  $\mathcal{H}(S)$ is contractively included in the Hardy space $\HH_2(\Lambda_+,\mathfrak{H}_2).$
\end{proposition}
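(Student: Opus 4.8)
The plan is to identify $\mathcal{H}(S)$ by means of the complementation principle of de Branges, which I recall in the form convenient here: if $\mathcal{K}$ is a reproducing kernel Hilbert space of $\mathfrak{H}_2$-valued functions on $\Lambda_+$ with reproducing kernel $K_{\mathcal{K}}(z,w)$, and $T$ is a bounded positive operator on $\mathcal{K}$, then $\ran\sqrt{T}$, equipped with the range norm $\|\sqrt{T}h\|:=\|Ph\|_{\mathcal{K}}$ (with $P$ the orthogonal projection of $\mathcal{K}$ onto $\overline{\ran\sqrt{T}}$), is again a reproducing kernel Hilbert space, and its kernel $K_T(z,w)$ is determined by $K_T(\cdot,w)\xi=T\bigl(K_{\mathcal{K}}(\cdot,w)\xi\bigr)$. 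The verification of this is short and classical: $\ran\sqrt{T}$ is complete, $\sqrt{T}$ maps $\overline{\ran\sqrt{T}}$ unitarily onto $\ran\sqrt{T}$, and for $g=\sqrt{T}h$ with $h\in\overline{\ran\sqrt{T}}$ one has $\langle g,T(K_{\mathcal{K}}(\cdot,w)\xi)\rangle_{\ran\sqrt{T}}=\langle h,\sqrt{T}K_{\mathcal{K}}(\cdot,w)\xi\rangle_{\mathcal{K}}=\langle\sqrt{T}h,K_{\mathcal{K}}(\cdot,w)\xi\rangle_{\mathcal{K}}=\langle g(w),\xi\rangle_{\mathfrak{H}_2}$. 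I would apply this with $\mathcal{K}=\mathbf{H}_2(\Lambda_+,\mathfrak{H}_2)$, $K_{\mathcal{K}}(z,w)=K(z,w)I_{\mathfrak{H}_2}$, and $T=I-M_SM_S^{*}$; this $T$ is bounded and, because $M_S$ is a contraction by the preceding theorem, positive.

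Everything then reduces to the kernel identity $\bigl((I-M_SM_S^{*})(K(\cdot,w)\xi)\bigr)(z)=K^S(z,w)\xi$, that is, to showing
\[
\bigl(M_SM_S^{*}(K(\cdot,w)\xi)\bigr)(z)=\sum_{n=0}^{\infty}(Z^nS)(z)(Z^nS)(w)^{*}\xi .
\]
For this I would pass to the ``coefficient picture'': the map $f\mapsto(\hat f(n))_{n\ge 0}$ is a unitary from $\mathbf{H}_2(\Lambda_+,\mathfrak{H})$ onto the space of square-summable $\mathfrak{H}$-valued sequences, under which $K(\cdot,w)\xi$ corresponds to the sequence $(\bar w^{(n)}\xi)_{n\ge 0}$, the operator $Z$ becomes the forward shift (by \eqref{si3}), so that $(Z^nS)$ has coefficients $\hat S(k-n)$ for $k\ge n$ and $0$ for $k<n$, and, by the formula for $S\odot f$ in the theorem, $M_S$ becomes the lower triangular block Toeplitz operator with symbol $(\hat S(m))_{m\ge 0}$; consequently $M_S^{*}(K(\cdot,w)\xi)$ has $j$-th coefficient $\sum_{m\ge 0}\hat S(m)^{*}\bar w^{(j+m)}\xi$. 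Applying $M_S$ to this, then substituting $p=k-n$ and $l=p+m$ and interchanging the order of summation, gives
\[
\bigl(M_SM_S^{*}(K(\cdot,w)\xi)\bigr)(z)=\sum_{k,l\ge 0}z^{(k)}\Bigl(\sum_{p=0}^{\min(k,l)}\hat S(k-p)\hat S(l-p)^{*}\Bigr)\bar w^{(l)}\xi ,
\]
while expanding each factor $(Z^nS)(z)$ and $(Z^nS)(w)^{*}$ in terms of $z^{(k)}$ and $\bar w^{(l)}$ and summing over $n$ (which runs from $0$ to $\min(k,l)$) produces exactly the same double series. This establishes the kernel identity, hence, by uniqueness of the reproducing kernel, $\mathcal{H}(S)=\ran\sqrt{I-M_SM_S^{*}}$ with the range norm.

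The step that requires genuine care is the convergence of these (in fact triple) series at each fixed $z\in\Lambda_+$ and the legitimacy of the rearrangements: here I would use the defining Schur hypothesis $\sum_{n}\|(Z^nS)(z)\|^2<\infty$ on $\Lambda_+$ together with the Cauchy--Schwarz inequality and the Hadamard growth estimate \eqref{hadamard} for $|z^{(n)}|$ and $|\bar w^{(n)}|$ on the right half-lattice, which render all the sums involved absolutely convergent and thereby justify the interchanges. Finally, the contractive inclusion is immediate from the description of the norm: if $g=\sqrt{I-M_SM_S^{*}}\,h$ with $h\in\overline{\ran\sqrt{I-M_SM_S^{*}}}$, then $\|g\|_{\mathbf{H}_2}=\|\sqrt{I-M_SM_S^{*}}\,h\|_{\mathbf{H}_2}\le\|h\|_{\mathbf{H}_2}=\|g\|_{\mathcal{H}(S)}$, since $0\le I-M_SM_S^{*}\le I$ forces $\|\sqrt{I-M_SM_S^{*}}\|\le 1$; thus $\mathcal{H}(S)$ sits contractively inside $\mathbf{H}_2(\Lambda_+,\mathfrak{H}_2)$.
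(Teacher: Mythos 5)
Your proposal is correct and follows essentially the same route as the paper: the paper's proof consists precisely of citing the kernel identity $K^S(\cdot,w)\alpha=(I-M_SM_S^{*})(K(\cdot,w)\alpha)$ and appealing (implicitly) to the standard fact that $\ran\sqrt{T}$ with the range norm is the reproducing kernel Hilbert space with kernel $T K(\cdot,w)$. You simply supply the details the paper omits --- the complementation principle and the block-Toeplitz coefficient computation verifying the kernel identity --- both of which check out.
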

\begin{proof} The statement follows from the fact that
$$K^S(\cdot,w)\alpha=(I-M_SM_S)^*(K(\cdot,w)\alpha),\quad \alpha\in\mathfrak{H}_2.$$
\end{proof}

We close this section with a short discussion of the convolution product.  Since not every discrete analytic function admits a power series expansion,  it is clear that such a product is not always defined.  Note,  however, that one can  define 
the convolution product of a $\mathbf{L}(\mathfrak{H}_1,\mathfrak{H}_2)$-valued discrete analytic polynomial 
$$p(z)=\sum_{n=0}^N \hat p(n)z^{(n)}$$
with any  $\mathbf{L}(\mathfrak{H}_3,\mathfrak{H}_1)$-valued discrete analytic function $f(z)$  by
$$(p\odot f)(z)=\sum_{n=0}^N \hat p(n)(Z^nf)(z).$$
Similarly,  if $g(z)$ is a $\mathbf{L}(\mathfrak{H}_2,\mathfrak{H}_3)$-valued discrete analytic function, then we set
$$(g\odot p)(z)=\sum_{n=0}^N (Z^ng)(z)\hat p(n).$$

\begin{theorem}\label{resolve} Let $A\in\mathbf{L}(\mathfrak{H})$ and let $p(z)=I-zA.$
There exists a discrete analytic function 
$f:\Lambda_+\longrightarrow\mathbf{L}(\mathfrak{H}),$
such that
\begin{equation}\label{lid}(p\odot f)(z)=I,\quad z\in\Lambda_+,\end{equation}
if,  and only if, 
\begin{equation}\label{excl}\{-2\alpha_\pm\}\cap\sigma(A)=\emptyset.\end{equation}
In this case such a function $f(z)$ is unique,  and satisfies
\begin{equation}\label{rid} (f\odot p)(z)=I,\quad z\in\Lambda_+.\end{equation}
It is given by
\begin{equation}\label{resA}f(z)=(I-zA)^{-\odot}:=e_A(z)=(I+A)^{\Re(z)}(I+\alpha_+A)^{\Im(z)}(I+\alpha_-A)^{-\Im(z)}.\end{equation}
\end{theorem}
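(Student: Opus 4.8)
The plan is to turn \eqref{lid} into a pair of elementary recursions and then recognize its solution as the operator exponential \eqref{resA}.

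\emph{Reduction.} Since $p(z)=z^{(0)}I+z^{(1)}(-A)$, for any discrete analytic $f$ one has $(p\odot f)(z)=f(z)-A(Zf)(z)$. Applying $\delta_x$ and using \eqref{structural-identity2}, \eqref{lid} forces $\delta_xf=Af$; conversely, once $\delta_xf=Af$, then (because $Z$ commutes with left multiplication by the constant $A$, and by \eqref{structural-identity1}) $A(Zf)(z)=(Z\delta_xf)(z)=f(z)-f(0)$, so $(p\odot f)(z)\equiv f(0)$. Hence \eqref{lid} is equivalent to the conjunction $\delta_xf=Af$ and $f(0)=I$. Feeding $f(z+1)=(I+A)f(z)$ into \eqref{ferrand} and clearing denominators, discrete analyticity of such an $f$ is in turn equivalent to the single further recursion $(I+\alpha_-A)f(z+i)=(I+\alpha_+A)f(z)$ on $\Lambda_+$. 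So the whole problem reads: find $f$ on $\Lambda_+$ with $f(0)=I$, $f(z+1)=(I+A)f(z)$, and $(I+\alpha_-A)f(z+i)=(I+\alpha_+A)f(z)$.

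\emph{Sufficiency, the formula, and uniqueness.} Assume \eqref{excl}. Then $-1/\alpha_\pm=-2\alpha_\mp\notin\sigma(A)$, so $I+\alpha_+A$ and $I+\alpha_-A$ are invertible and \eqref{resA} defines an honest $\mathbf L(\mathfrak H)$-valued function on $\Lambda_+$ (only nonnegative powers of $I+A$ occur, since $\Re z\ge 0$). The three factors are mutually commuting functions of $A$, whence $e_A(z)$ commutes with $A$, $e_A(0)=I$, $e_A(z+1)=(I+A)e_A(z)$, and $(I+\alpha_-A)e_A(z+i)=(I+\alpha_+A)e_A(z)$; so $e_A$ solves the two recursions. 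That $\bar De_A=0$ I would get from the scalar case: for $\lambda\notin\{-1,-2\alpha_\pm\}$ the function $e_\lambda$ of Proposition \ref{eigenprop} is discrete analytic, so \eqref{ferrand} written for $e_\lambda$ is an identity between rational functions of $\lambda$ whose only poles on $\Lambda_+$ lie at $-2\alpha_\pm$; applying the Riesz--Dunford calculus, legitimate because $\sigma(A)$ avoids these poles, yields the operator identity $\bar De_A=0$. By the reduction, $(p\odot e_A)=I$. The analogue of \eqref{structural-identity1} for right multiplication, again using that $e_A(z)$ commutes with $A$, gives $(e_A\odot p)(z)=e_A(z)-(Ze_A)(z)A=e_A(z)-(e_A(z)-I)=I$, which is \eqref{rid}. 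For uniqueness, any solution obeys the two recursions; since $I+\alpha_-A$ is invertible, $f(z+i)=(I+\alpha_-A)^{-1}(I+\alpha_+A)f(z)$, and together with $f(z+1)=(I+A)f(z)$ and $f(0)=I$ this determines $f$ on all of $\Lambda_+$, so $f=e_A$.

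\emph{Necessity} is the part I expect to be delicate. Given a solution $f$, evaluate the $i$-recursion at $z=0$ and $z=-i$; using $I+\alpha_+A=2\alpha_-I+i(I+\alpha_-A)$ and $I+\alpha_-A=2\alpha_+I-i(I+\alpha_+A)$, these become $(I+\alpha_-A)(f(i)-iI)=2\alpha_-I$ and $(I+\alpha_+A)(f(-i)+iI)=2\alpha_+I$, so $I+\alpha_\pm A$ are at least right invertible. The real work is to upgrade this to two-sided invertibility, equivalently to exclude $-2\alpha_\pm\in\sigma(A)$. In the scalar case this is transparent: $a=-2\alpha_+$ would force $(1-i)f(z)\equiv 0$ against $f(0)=1$. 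The plan is to transport that contradiction: if $-2\alpha_+\in\sigma(A)$, pass to an approximate eigenvector (of $A$, or of $A^*$ according to whether the point lies in the approximate point spectrum of $A$), iterate the recursion to $(I+\alpha_-A)^kf(ik)=(I+\alpha_+A)^k$ with $f(0)=I$, and extract a relation of the shape ``$0=$ (vector bounded away from $0$)'' incompatible with $f$ being bounded-operator-valued on all of $\Lambda_+$. Making this last step rigorous — obtaining quantitative control of $f$ at the bad spectral point — is, in my view, the crux of the theorem.
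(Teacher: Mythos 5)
Your reduction of \eqref{lid} to the pair of recursions $f(0)=I$, $f(z+1)=(I+A)f(z)$, $(I+\alpha_-A)f(z+i)=(I+\alpha_+A)f(z)$, and the ensuing treatment of sufficiency, of formula \eqref{resA}, of \eqref{rid}, and of uniqueness, are correct and essentially the same as the paper's argument, which verifies the same two recursions. (Your Riesz--Dunford detour for $\bar De_A=0$ is redundant: once $\delta_x f=Af$, one has $\bar Df=\alpha_-Af+\alpha_+(I+\alpha_-A)\delta_yf$, so the $y$-recursion is already \emph{equivalent} to discrete analyticity, as your own reduction notes.)

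The gap you flag in the necessity direction is genuine, and the repair you sketch cannot close it. What a solution $f$ of \eqref{lid} yields --- and this is also all that the paper's proof extracts, in the form $I=(I+\alpha_\pm A)\bigl((I+\alpha_\pm\delta_x)^{-1}f\bigr)(0)$ via Proposition \ref{propfourmod} --- is that $I+\alpha_+A$ and $I+\alpha_-A$ are \emph{right} invertible. For matrices (the setting of Section \ref{rationalsec}) this equals invertibility and \eqref{excl} follows; in Theorem \ref{schreal} the operator $A$ is a contraction, so $\|\alpha_\pm A\|<1$ and invertibility is automatic. But for general $A\in\mathbf{L}(\mathfrak{H})$ the contradiction you hope to extract from an approximate eigenvector does not exist: if both $I+\alpha_\pm A$ are surjective while one has nontrivial kernel, a solution of \eqref{lid} can still be built. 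Concretely, take $A=2\alpha_-(S^*-I)$ with $S$ the unilateral shift on $\ell^2$: then $I+\alpha_+A=S^*$ is surjective but not injective (so $-2\alpha_-\in\sigma(A)$), while $I+\alpha_-A=(1+i)I-iS^*$ is invertible. Setting $f(iy)=\bigl[(I+\alpha_-A)^{-1}S^*\bigr]^{y}$ for $y\ge 0$, $f(-iy)=\bigl[S(I+\alpha_-A)\bigr]^{y}$ for $y>0$, and $f(x+iy)=(I+A)^xf(iy)$, one checks (using $S^*S=I$) that both recursions hold, hence $f$ is discrete analytic and $(p\odot f)=I$, yet \eqref{excl} fails. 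So the implication you are trying to prove is not available in this generality; the condition your evaluations at $z=0$ and $z=-i$ actually characterize is right invertibility of $I+\alpha_\pm A$ (under which uniqueness is lost), and the spectral condition \eqref{excl} is the correct equivalent only in the finite-dimensional or contractive situations in which the paper uses the theorem. Your proposal is therefore incomplete at exactly the point where the paper's own proof is thinnest, and no argument along the lines you propose can finish it as stated.
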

\begin{proof} 
Suppose that $f$ is  a discrete analytic  function, satisfying \eqref{lid}. Then
$$f=I+Z(Af),$$
hence $f(0)=I,$ and
\begin{gather*}(\delta_xf)(z)=Af(z),\quad z\in\Lambda_+;\\
((I+\alpha_\pm \delta_x)f)(z)=(I+\alpha_\pm A)f(z),\quad z\in\Lambda_+;\\
I=f(0)=(I+\alpha_\pm A)((I+\alpha_{\pm}\delta_x)^{-1}f)(0),
\end{gather*}
where we used Proposition \ref{propfourmod} to obtain the last equality.
Thus the necessity of the condition \eqref{excl} is established.\smallskip

Furthermore,  since for any $z\in\Lambda_+$ we have
\begin{gather*}
f(z+1)=((I+\delta_x)f)(z)=(I+A)f(z),\\
\begin{split}f(z+i)=((I+\delta_y)f)(z)&=((I+\alpha_+\delta_x)(I+\alpha_-\delta_x)^{-1}f)(z)\\&=(I+\alpha_+A)(I+\alpha_-A)^{-1}f(z),\end{split}
\end{gather*}
(see \eqref{mod2}, 
formula \eqref{resA} follows.  Since $f(z)$ commutes with $A,$ equality \eqref{rid} holds, as well. 
The converse direction is proved by reversing the above argument.
\end{proof}

\begin{remark}
If the spectral radius $\rho(A)<\sqrt{2},$ then
$$(I-zA)^{-\odot}=\sum_{n=0}^\infty z^{(n)}A^n.$$
This expansion also holds for any $A$ satisfying \eqref{excl} and $z\in\ZZ_+.$
\end{remark}

\section{Rational discrete analytic functions}
\label{rationalsec}
 In this section we seek to characterize $\odot$-quotients of discrete analytic polynomials, which we shall call {\em rational discrete analytic  functions,} in terms of system realizations.

\begin{definition} A discrete analytic function $f:\Lambda_+\longrightarrow\CC^{m\times n}$ is said to be rational if it can be represented as
\begin{equation}\label{realization}f(z)=D+C(I-zA)^{-\odot}\odot (zB),\end{equation}
where $A,B,C,D$ are complex matrices of suitable dimensions,  and 
$$\{-2\alpha_\pm\}\cap\sigma(A)=\emptyset.$$
\end{definition}

\begin{remark} A rational discrete analytic function may not admit a power series expansion in terms of $z^{(n)}$.\end{remark}

\begin{proposition} \label{ratmap} The linear mapping
$$T:D+C(I-zA)^{-\odot}\odot(zB)\mapsto D+zC(I-zA)^{-1}B$$
 establishes a $1$-to-$1$ correspondence between rational discrete analytic functions and rational functions of the complex variable $z$,  that have no poles in $\{0,-\alpha_\pm\}.$
\end{proposition}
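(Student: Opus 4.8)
The map $T$ keeps the realization data $(A,B,C,D)$ fixed, replacing only the $\odot$-resolvent $(I-zA)^{-\odot}=e_A(z)$ by the ordinary resolvent $(I-zA)^{-1}$. So the assertion unpacks into: $T$ is \emph{well defined} and takes values among rational functions with no pole in $\{0,-\alpha_\pm\}$; $T$ is \emph{injective}; $T$ is \emph{surjective} onto that class. Linearity of $T$ is then routine, obtained from block-diagonal realizations, once well-definedness is known. Since all the matrices involved are finite-dimensional, the functional calculus used below is simply the calculus of rational functions of a square matrix, and the quantities mediating between the two sides are the ``Markov parameters'' $D$ and $CA^{k}B$, $k\ge0$.

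Well-definedness and the target inclusion are the easy part. From $e_A(0)=I$ and $\delta_x e_A=Ae_A$ (Theorem~\ref{resolve}) together with \eqref{structural-identity2}, one finds $f(0)=D$ and $(\delta_x^n f)(0)=CA^{n-1}B$ for $n\ge1$; these coincide with the Taylor coefficients at $0$ of the rational function $R:=T(f)$, and a rational function is recovered from its Taylor series at a point of analyticity, so $R$ is determined by $f$ alone. The factor $z$ shows $R$ has no pole at $0$, and since $\{1/(-\alpha_+),\,1/(-\alpha_-)\}=\{-2\alpha_-,-2\alpha_+\}$ is disjoint from $\sigma(A)$, $R$ also has no pole at $-\alpha_\pm$. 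For surjectivity, given rational $R$ without poles in $\{0,-\alpha_\pm\}$, put $D:=R(0)$, observe $(R(w)-D)/w$ is rational, and take a minimal realization $(R(w)-D)/w=C(I-wA)^{-1}B$; by the standard theory its spectrum $\sigma(A)$ consists of reciprocals of the finite poles of $R$ (possibly together with $0$ when $R$ is nonproper, which is carried by a nilpotent block), hence avoids $\{-2\alpha_\pm\}$ exactly because $R$ avoids $\{-\alpha_\pm\}$; then $f:=D+C(I-zA)^{-\odot}\odot(zB)$ is rational discrete analytic with $T(f)=R$.

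The crux is injectivity, and the device is an intrinsic functional-calculus expression for $f$. Let $\psi_z(\xi):=\xi^{-1}\bigl(e_\xi(z)-1\bigr)$; by \eqref{genela}--\eqref{gene} this is a rational function of $\xi$ whose only possible poles lie in $\{-2\alpha_\pm\}$, hence $\psi_z$ is analytic on $\sigma(A)$ for every admissible quadruple. The claim is that $(Ze_A)(z)=\psi_z(A)$ for all $z\in\Lambda_+$: both sides are discrete analytic in $z$ (for $z\mapsto\psi_z(A)$ because its $z$-dependence enters only through the discrete analytic eigenfunction $e_\xi(z)$), both vanish at $z=0$, and both have $\delta_x$-derivative $e_A(z)$ — for $Ze_A$ this is \eqref{structural-identity2}, for $\psi_z(A)$ it follows from $\delta_x e_\xi(z)=\xi e_\xi(z)$ and the fact that $e_\xi(z)$ evaluated at $\xi=A$ is $e_A(z)$ (see \eqref{resA}) — and a discrete analytic function on $\Lambda_+$ annihilated by $\delta_x$ is constant, because \eqref{mod1} forces $\delta_y h=0$ as soon as $\delta_x h=0$; so the difference of the two sides is discrete analytic, annihilated by $\delta_x$, and vanishes at the origin, hence is identically zero. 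Consequently $f(z)=D+C\psi_z(A)B$. Now let two admissible quadruples produce the same $R=T(f)$. Their $D$'s coincide, and the substitution $w=1/\xi$ in $R(w)=D+wC(I-wA)^{-1}B$ yields the resolvent identity $C(\xi I-A)^{-1}B=R(1/\xi)-D$. By the Riesz functional calculus,
\[
C\psi_z(A)B=\frac{1}{2\pi i}\oint_{\Gamma}\psi_z(\xi)\bigl(R(1/\xi)-D\bigr)\,d\xi
\]
for any contour $\Gamma$ separating $\sigma(A)$ from $\{-2\alpha_\pm\}$. The poles of the integrand are those of $\psi_z$, confined to $\{-2\alpha_\pm\}$ and thus outside $\Gamma$, together with those of the realization-independent function $\xi\mapsto R(1/\xi)-D$, a finite set inside $\sigma(A)$; shrinking $\Gamma$ to a contour around only the latter produces a contour and an integral that do not depend on the chosen realization. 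Hence $C\psi_z(A)B$, and therefore $f(z)$, is a function of $R$ alone, which is injectivity.

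I expect this last argument to be the main obstacle. The point is that a rational discrete analytic function need not have a $z^{(n)}$-power-series expansion, so $f$ cannot be recovered from $R$ by a termwise comparison of coefficients; what saves the day is the identity $(Ze_A)(z)=\psi_z(A)$ together with the observation that the poles of $\psi_z$ stay inside the excluded set $\{-2\alpha_\pm\}$ — which is exactly the spectral hypothesis on $A$ — so that a residue computation reads $f(z)$ off the transfer function $R$ intrinsically. Lesser technical points that still need attention are the classical realization argument in the surjectivity step when $R$ is nonproper at infinity, and the sign bookkeeping $\{1/(-\alpha_+),1/(-\alpha_-)\}=\{-2\alpha_-,-2\alpha_+\}$ matching the exclusion sets $\{-\alpha_\pm\}$ and $\{-2\alpha_\pm\}$.
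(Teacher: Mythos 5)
Your argument is correct, and its core --- the injectivity step --- is genuinely different from the paper's. The well-definedness part coincides with the paper's (both read off $D=f(0)$ and the Markov parameters $CA^{k-1}B=(\delta_x^kf)(0)$ from $f$ and note that these are the Taylor coefficients of $Tf$ at $0$), and for surjectivity you supply the standard minimal-realization argument that the paper leaves implicit, including the correct bookkeeping $1/(-\alpha_\pm)=-2\alpha_\mp$. Where you diverge is injectivity. The paper argues by pure linear algebra on a kernel element: vanishing Markov parameters force the reachable subspace $\mathcal M=\ran(B)+\ran(AB)+\dots+\ran(A^{m-1}B)$ to be an $A$-invariant subspace of $\ker(C)$; since \eqref{excl} makes $I+\alpha_\pm A$ restrict to bijections of the finite-dimensional $\mathcal M$, formula \eqref{resA} shows $e_A(z)$ maps $\ran(B)$ into $\mathcal M\subseteq\ker(C)$, whence $Ce_A(z)B\equiv 0$ and then $C(I-zA)^{-\odot}\odot(zB)=Z\bigl(Ce_A(\cdot)B\bigr)=0$. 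You instead prove the identity $(Ze_A)(z)=\psi_z(A)$ with $\psi_z(\xi)=\xi^{-1}(e_\xi(z)-1)$ (your verification --- same value at $0$, same $\delta_x$-derivative, and the fact that $\delta_xh=0$ forces $\delta_yh=0$ via \eqref{mod1} --- is sound, granting the routine Riesz--Dunford justification that $z\mapsto\psi_z(A)$ is discrete analytic) and then recover $f$ from $R=Tf$ by a residue computation whose contour can be pinned to the realization-independent pole set of $\xi\mapsto R(1/\xi)-R(0)$. The paper's route is more elementary and shorter; yours buys an explicit inversion formula
\[
f(z)=R(0)+\frac{1}{2\pi i}\oint_{\Gamma}\psi_z(\xi)\bigl(R(1/\xi)-R(0)\bigr)\,d\xi,
\]
i.e.\ a closed-form description of $T^{-1}$ rather than a mere bijectivity statement, and it isolates cleanly why the excluded set $\{-2\alpha_\pm\}$ is exactly the right spectral hypothesis (it is where the poles of $\psi_z$ live). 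Both proofs are valid.
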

\begin{proof}
For a rational discrete analytic function $f(z)$ of the form \eqref{realization},  
matrices $D$ and $CA^{k-1}B,$ $k\in\NN,$ are determined by $f$ as follows:
$$D=f(0),\quad CA^{k-1}B=(\delta_x^k f)(0).$$ Indeed,  since $\delta_xZ=I$ and
$$\delta_x(I-zA)^{-\odot}=(I-zA)^{-\odot}A,$$
one has
$$\delta_x^kf=C(I-zA)^{-\odot}A^{k-1}B.$$
Thus 
$$(Tf)(z)=\sum_{n=0}^\infty (\delta_x^n f)(0)z^n$$
is independent of the choice of representation \eqref{realization};
the mapping $T$ is well-defined and surjective.  The linearity of $T$ is clear from the formula
\begin{multline*}
D_1+C_1(I-zA_1)^{-\odot}\odot(zB_1)+D_2+C_2(I-zA_2)^{-\odot}\odot(zB_2)\\
=D_1+D_2+\begin{pmatrix}C_1&C_2\end{pmatrix}\left(I-z\begin{pmatrix}A_1&0\\0&A_2\end{pmatrix}\right)^{-\odot}\odot
\left(z\begin{pmatrix}B_1\\B_2\end{pmatrix}\right).
\end{multline*}
 The injectivity of $T$ can be established as follows.  Consider a rational function
$$D+C(I-zA)^{-\odot}\odot(zB)$$
in the kernel of $T,$ where $A\in\CC^{m\times m}$  satisfies  \eqref{excl}. From the Maclaurin expansion of $(Tf)(z)$ it follows that
$$D=0\text{ and } CA^{k-1}B=0,\quad k\in\NN.$$
 if $A\in\CC^{m\times m},$ let
$$\mathcal M=\ran(B)+\ran(AB)+\ran(A^2B)+\dots+\ran(A^{m-1}B),$$
-- an $A$-invariant subspace of $\ker(C)$. Since 
  $A$ satisfies  \eqref{excl},  and $\mathcal M$ is a finite-dimensional complex vector space,  $(I+\alpha_\pm A)$ map $\mathcal M$ bijectively to itself.  In view of \eqref{resA},  
$$C(I-zA)^{-\odot}B=0,\quad z\in\Lambda_+,$$
hence
$$C(I-zA)^{-\odot}\odot(zB)=Z(C(I-zA)^{-\odot}B)=0.$$
\end{proof}

\begin{proposition}
  \label{rk-ex}
  The kernel
  \[
K(z,w)=\sum_{n=0}^\infty z^{(n)}\overline{w^{(n)}}
\]
is rational (as a function of $z$) for $w\in\Lambda_+,$ 
and has McMillan degree ${\rm Re}\,w +|{\rm Im}\, w|$. 
\end{proposition}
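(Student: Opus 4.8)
The plan is to recognise $K(\cdot,w)$, for fixed $w\in\Lambda_+$, as the rational discrete analytic function that corresponds, under the bijection $T$ of Proposition~\ref{ratmap}, to an explicit classical rational function of one variable, and then to read off the McMillan degree from the latter. The key observation is that the Maclaurin coefficients $\overline{w^{(n)}}=\bar w^{(n)}$ of $K(z,w)$ are themselves the Taylor coefficients of a rational function: since $\bar w\in\Lambda_+$, \eqref{hadamard} guarantees that $g_w(\zeta):=\sum_{n\ge0}\bar w^{(n)}\zeta^n$ converges for $|\zeta|<\sqrt2$, and by \eqref{gene}--\eqref{genela} it sums to
\begin{equation*}
g_w(\zeta)=e_\zeta(\bar w)=(1+\zeta)^{\Re w}\left(\frac{1+\alpha_+\zeta}{1+\alpha_-\zeta}\right)^{-\Im w}
\end{equation*}
(using $\Re\bar w=\Re w$ and $\Im\bar w=-\Im w$). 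This $g_w$ is analytic at $0$, and its only finite poles lie at $\zeta=-1/\alpha_+=-1+i$ (when $\Im w>0$) or $\zeta=-1/\alpha_-=-1-i$ (when $\Im w<0$); in particular it has no pole at $0$ or at $-\alpha_\pm$.

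By Proposition~\ref{ratmap} there is then a unique rational discrete analytic $f$ with $Tf=g_w$, and the next step is to verify $f=K(\cdot,w)$. Fix a minimal realization $f=D+C(I-zA)^{-\odot}\odot(zB)$. As noted in the proof of Proposition~\ref{ratmap}, $D=f(0)$ and $(\delta_x^nf)(0)=CA^{n-1}B$ for $n\ge1$; matching these against $(Tf)(\zeta)=\sum_n(\delta_x^nf)(0)\zeta^n=g_w(\zeta)=\sum_n\bar w^{(n)}\zeta^n$ yields $D=\bar w^{(0)}=1$ and $CA^{n-1}B=\bar w^{(n)}$. The eigenvalues of $A$ in a minimal realization are $0$ (with multiplicity the degree of the polynomial part of $g_w$) together with the reciprocals of the finite poles of $g_w$, so $\sigma(A)\subseteq\{0,-\alpha_+,-\alpha_-\}$ and $\rho(A)\le1/\sqrt2<\sqrt2$; hence by the remark following Theorem~\ref{resolve} we have $(I-zA)^{-\odot}=\sum_{n\ge0}z^{(n)}A^n$. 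Since $\odot$-multiplying on the right by the polynomial $zB=z^{(1)}B$ amounts to applying $Z$ and then multiplying by $B$,
\begin{equation*}
f(z)=D+C\bigl(Z(I-zA)^{-\odot}\bigr)(z)B=D+\sum_{n\ge1}z^{(n)}CA^{n-1}B=\sum_{n\ge0}z^{(n)}\bar w^{(n)}=K(z,w).
\end{equation*}
Thus $K(\cdot,w)$ is rational and $T$ carries it to $g_w$, so its McMillan degree — the size of a minimal realization \eqref{realization}, which $T$ preserves as it preserves the Markov parameters $D,\,CA^{n-1}B$ — equals the classical McMillan degree of $g_w$.

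It remains to compute $\deg g_w$. Writing $g_w(\zeta)=(1+\zeta)^{\Re w}(1+\alpha_-\zeta)^{\Im w}/(1+\alpha_+\zeta)^{\Im w}$ when $\Im w\ge0$, and $g_w(\zeta)=(1+\zeta)^{\Re w}(1+\alpha_+\zeta)^{-\Im w}/(1+\alpha_-\zeta)^{-\Im w}$ when $\Im w<0$, in both cases numerator and denominator are coprime: the roots $-1$, $-1/\alpha_-=-1-i$, $-1/\alpha_+=-1+i$ are pairwise distinct, and the single root of the denominator is not a root of the numerator. Hence $\deg g_w=\max\bigl(\deg(\text{numerator}),\deg(\text{denominator})\bigr)=\max\bigl(\Re w+|\Im w|,\,|\Im w|\bigr)=\Re w+|\Im w|$.

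The bookkeeping above is routine; the point that requires care is the degree count, where two things must not be missed: that $g_w$ is \emph{non-proper} once $\Re w\ge1$, so that the pole at infinity contributes $\Re w$ (equivalently, a minimal realization carries a nilpotent block of size $\Re w$); and that there is no pole--zero cancellation, which reduces to the elementary observation that $-1$, $-1/\alpha_+$, $-1/\alpha_-$ are three distinct points. Dropping either consideration breaks the formula, and this genuine dependence of $\deg_z K(z,w)$ on $w$ is exactly the phenomenon advertised in the introduction.
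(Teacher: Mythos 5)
Your proposal is correct and follows essentially the same route as the paper: identify $K(\cdot,w)$ as the preimage under $T$ of the generating function $\zeta\mapsto e_\zeta(\bar w)$ and read off the McMillan degree from that classical rational function. You supply details the paper leaves implicit (the conjugation giving $e_\zeta(\bar w)$ rather than $e_\zeta(w)$, the verification via $\rho(A)\le 1/\sqrt2$ that $T^{-1}g_w$ really is $K(\cdot,w)$, and the coprimality and pole-at-infinity bookkeeping in the degree count), all of which check out.
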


\begin{proof}
  From the generating function formula \eqref{gene}, we see that the function of the complex variable $t$
  \[
t\mapsto e_t(w)
\]
is rational and regular at the origin, and so there exist matrices $A,B,C$ depending on $w$ such that $w^{(n)}=CA^{n-1}B$, $n=0,1,\ldots$.  To conclude, we note that $K(\cdot,w)=T^{-1}e_\cdot(w)$ and that
the McMillan degree of
$e_\cdot(w)$ is equal to $\Re(w) +|\Im(w)|$.
\end{proof}

Representations \eqref{realization} make it possible to define the convolution product of rational discrete analytic functions of suitable dimensions,  that extends the convolution product of discrete analytic polynomials and corresponds to the pointwise product of rational functions of the complex variable $z.$

\begin{definition}\label{defprod}
If $f_1(z)$ and $f_2(z)$ are rational discrete analytic functions of suitable dimensions,  
$$f_k(z)=D_k+C_k(I-zA_k)^{-\odot}\odot(zB_k),\quad k=1,2,$$
then
$$(f_2\odot f_1)(z):=D+C(I-zA)^{-\odot}\odot(zB),$$
where
\begin{gather*}
A=\begin{pmatrix} A_1&0\\ B_2C_1&A_2\end{pmatrix}, \quad B=\begin{pmatrix} B_1\\ B_2D_1\end{pmatrix};\\
C=\begin{pmatrix} D_2C_1&C_2\end{pmatrix},\quad D=D_2D_1.
\end{gather*}
\end{definition}

If $f_1(z)$ and $f_2(z)$ are rational discrete analytic functions of suitable dimensions,  then 
\begin{equation}\label{convpont}(T(f_1\odot f_2))(z)=(Tf_1)(z)(Tf_2)(z).\end{equation} In particular,  we can describe the $\odot$-inverse of a rational discrete analytic function. 
\begin{proposition}\label{invprop}
Let $f:\Lambda_+\longrightarrow\CC^{n\times n}$ be a discrete analytic rational function;
$$f(z)=D+C(I-zA)^{-\odot}\odot (zB).$$
Denote $$A^\times=A-BD^{-1}C.$$ If $$\det(D)\not=0\quad\text{and}\quad \{-2\alpha_\pm\}\cap\sigma(A^\times)=\emptyset,$$
then the rational discrete analytic function
$$f^{-\odot}(z)=D^{-1}-D^{-1}C(I-zA^\times)^{-\odot}\odot(zBD^{-1}),$$
is the $\odot$-inverse of $f(z):$
$$(f\odot f^{-\odot})(z)=(f^{-\odot}\odot f)(z)=I.$$
\end{proposition}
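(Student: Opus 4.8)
The plan is to transport the claimed identity down to the level of classical rational functions through the correspondence $T$ of Proposition \ref{ratmap}; there the analogue is the familiar state-space formula for the inverse of a transfer function.

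First I would check that $f^{-\odot}(z)$ is itself a bona fide rational discrete analytic function: it is presented in the form \eqref{realization} with state matrix $A^\times$, and the hypothesis $\{-2\alpha_\pm\}\cap\sigma(A^\times)=\emptyset$ is exactly the admissibility condition required in the definition of rationality. Applying $T$ to $f$ and to $f^{-\odot}$ then yields
$$(Tf)(z)=D+zC(I-zA)^{-1}B,\qquad (Tf^{-\odot})(z)=D^{-1}-zD^{-1}C(I-zA^\times)^{-1}BD^{-1}.$$

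The core of the argument is the purely algebraic identity $(Tf)(z)(Tf^{-\odot})(z)=I=(Tf^{-\odot})(z)(Tf)(z)$. I would prove this by expanding the product, using $BD^{-1}C=A-A^\times$ to rewrite the quadratic term, and then applying the resolvent-type identity
$$(I-zA)^{-1}(A-A^\times)(I-zA^\times)^{-1}=\frac{1}{z}\left((I-zA)^{-1}-(I-zA^\times)^{-1}\right),$$
after which the remaining terms cancel in pairs; alternatively one may simply invoke the classical transfer-function inversion formula of linear system theory.

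Finally I would assemble the conclusion. By Definition \ref{defprod}, the realizations of $f\odot f^{-\odot}$ and of $f^{-\odot}\odot f$ have block lower-triangular state matrices with diagonal blocks $A$ and $A^\times$, so their spectra lie in $\sigma(A)\cup\sigma(A^\times)$; since both of these sets avoid $\{-2\alpha_\pm\}$, each product is a well-defined rational discrete analytic function. Now \eqref{convpont} gives $T(f\odot f^{-\odot})=(Tf)(Tf^{-\odot})=I$ and $T(f^{-\odot}\odot f)=(Tf^{-\odot})(Tf)=I$. Since the constant function $I$ is rational with $T(I)=I$ and $T$ is injective by Proposition \ref{ratmap}, we conclude $f\odot f^{-\odot}=f^{-\odot}\odot f=I$. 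The only real obstacle is the bookkeeping in the resolvent computation together with checking the spectral condition for the block-triangular state matrices in both orders of the product; everything else is a direct application of the machinery already set up.
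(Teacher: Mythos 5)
Your proposal is correct and takes exactly the route the paper intends: the paper states Proposition \ref{invprop} without proof, treating it as an immediate consequence of \eqref{convpont} and the classical state-space inversion formula, which is precisely what you carry out (transport via $T$, the resolvent identity $(I-zA)^{-1}(A-A^\times)(I-zA^\times)^{-1}=\tfrac{1}{z}\bigl((I-zA)^{-1}-(I-zA^\times)^{-1}\bigr)$, the spectral check for the block-triangular state matrices of both products, and injectivity of $T$ from Proposition \ref{ratmap}). Your write-up supplies details the paper leaves implicit, but the argument is the same.
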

We are now in a position to compare our notion of discrete analytic rationality to the classical notion of "quotients of polynomials".
\begin{theorem}  A discrete analytic function $f:\Lambda_+\longrightarrow\CC^{m\times n}$ is rational if,  and only if,  there exists a $\CC$-valued  discrete analytic polynomial $p(z)\not\equiv 0,$ such that
$(pI\odot f)(z)$ is a discrete analytic polynomial.
\end{theorem}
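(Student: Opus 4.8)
The plan is to use the correspondence $T$ of Proposition~\ref{ratmap} --- which sends each $z^{(n)}$ to $z^n$ and the product $\odot$ to the ordinary product, and which is injective with range the rational matrix functions having no pole in $\{0,-\alpha_\pm\}$ --- together with the structural identity $\delta_xZ=I$ (equation~\eqref{structural-identity2}) and the invertibility of $I+\alpha_\pm\delta_x$ on $\mathbf{H}(\Lambda_+,\cdot)$ (Proposition~\ref{propfourmod}). Observe first that $T$ restricts to a bijection from discrete analytic polynomials onto ordinary polynomials, carrying $\sum_k\hat p(k)z^{(k)}$ to $\sum_k\hat p(k)z^k$ (immediate from the formula $(Tf)(z)=\sum_n(\delta_x^nf)(0)z^n$ in the proof of Proposition~\ref{ratmap}, together with \eqref{si3} and $0^{(0)}=1$, $0^{(n)}=0$ for $n\ge 1$). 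The ``only if'' direction is then short: if $f$ is rational, write $Tf=N(z)/q(z)$ over a common denominator with $q(0)\neq 0$ (possible since $Tf$ has no pole at the origin), put $p=T^{-1}q$, so that $p\not\equiv 0$ because $p(0)=q(0)\neq 0$; then $T(pI\odot f)=q\,Tf=N$ is a matrix polynomial, so by the injectivity of $T$ and the bijection just noted, $pI\odot f$ is a discrete analytic polynomial.

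For the substantive ``if'' direction, suppose $p\not\equiv 0$ is a scalar discrete analytic polynomial with $r:=pI\odot f=\sum_k\hat p(k)Z^kf$ a discrete analytic polynomial, and write $P(Z)=\sum_k\hat p(k)Z^k$ and $q=Tp$, so that $P(Z)f=r$. Factor $q(z)=z^a(z+\alpha_+)^b(z+\alpha_-)^c\,\tilde q(z)$ with $\tilde q$ vanishing at none of $0,-\alpha_+,-\alpha_-$, so that $P(Z)=Z^a(Z+\alpha_+I)^b(Z+\alpha_-I)^c\,\tilde P(Z)$, where $\tilde P(Z)$ is the polynomial in $Z$ associated with $\tilde p:=T^{-1}\tilde q$. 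I would strip these factors off $P(Z)$ one at a time, keeping the right-hand side a discrete analytic polynomial throughout. A leading factor $Z$ is removed by applying $\delta_x$ (using $\delta_xZ=I$ to cancel it), which sends polynomials to polynomials. A leading factor $Z+\alpha_\pm I=\alpha_\pm(I-\mu_\pm Z)$ with $\mu_\pm=-1/\alpha_\pm$ is removed as follows: if $v$ is discrete analytic and $(I-\mu_\pm Z)v=h$, then applying $\delta_x$ and using $\delta_xZ=I$ gives $(I+\alpha_\pm\delta_x)v=\alpha_\pm\delta_xh$, hence $v=\alpha_\pm(I+\alpha_\pm\delta_x)^{-1}\delta_xh$; since $I+\alpha_\pm\delta_x$ is invertible on $\mathbf{H}(\Lambda_+,\cdot)$ and, by \eqref{si3}, acts as an invertible unitriangular operator in the basis $z^{(j)}$ --- and so preserves the finite-dimensional space of discrete analytic polynomials of any prescribed degree --- $v$ is again a polynomial whenever $h$ is. Peeling off all $a+b+c$ factors in turn yields an identity $\tilde P(Z)f=\hat r$ with $\hat r$ a discrete analytic polynomial.

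To conclude, note that $\tilde q$ has no zero in $\{0,-\alpha_\pm\}$, so the rational matrix function $T\hat r/\tilde q$ has no pole there; by Proposition~\ref{ratmap} there is a rational discrete analytic function $\tilde f$ with $T\tilde f=T\hat r/\tilde q$, and then $T(\tilde pI\odot\tilde f)=\tilde q\,(T\hat r/\tilde q)=T\hat r$, so $\tilde P(Z)\tilde f=\hat r=\tilde P(Z)f$ by injectivity of $T$. It remains to see that $\tilde P(Z)$ is injective on $\mathbf{H}(\Lambda_+,\CC^n)$, equivalently on $\CC$-valued functions (apply it to each entry): writing $\tilde P(Z)$ as a nonzero constant times $\prod_i(I-\nu_iZ)$, it suffices that every $I-\nu Z$ be injective, and $(I-\nu Z)g=0$ forces $g(0)=0$ (because $(Zg)(0)=0$) together with $\delta_xg=\nu g$ (apply $\delta_x$, use $\delta_xZ=I$); since every eigenfunction of $\delta_x$ on $\mathbf{H}(\Lambda_+,\CC)$ is nonzero at the origin (Proposition~\ref{eigenprop} and the displayed formula for $e_{-1}$), this gives $g\equiv 0$. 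Hence $f=\tilde f$ is rational.

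I expect the reduction in the ``if'' direction to be the main obstacle. One cannot simply set $f=p^{-\odot}I\odot r$, because the realization machinery and the map $T$ only become available once the relevant ``$T$-denominator'' avoids $\{0,-\alpha_\pm\}$ --- exactly the points at which Proposition~\ref{propfourmod} has its exceptions and at which $Z$ fails to be left-invertible in a polynomiality-preserving way --- so the offending factors of $q$ must be removed by hand, while simultaneously checking that discrete analyticity and polynomiality of the successive right-hand sides are preserved; once that is done, the injectivity of $\tilde P(Z)$ is what upgrades the explicitly constructed rational solution $\tilde f$ to $f$ itself.
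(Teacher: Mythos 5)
Your proof is correct, but the ``if'' direction follows a genuinely different route from the paper's. The paper picks a nonzero annihilating polynomial $p$ of \emph{minimal} degree and argues by contradiction that $Tp$ cannot vanish at $0$ or $-\alpha_\pm$ (each hypothetical zero would let one divide out a factor and contradict minimality), after which it invokes the $\odot$-inverse of Proposition \ref{invprop} to conclude; moreover, for the factors $z+\alpha_\pm$ the paper exploits the discrete Cauchy--Riemann identity (applying $\bar D$ to both sides to bring in $\delta_y$ and deduce that $p_1\odot f$ is a polynomial). You instead factor $Tp$ explicitly as $z^a(z+\alpha_+)^b(z+\alpha_-)^c\tilde q$ and strip the offending factors iteratively, handling $z+\alpha_\pm$ via the invertibility of $I+\alpha_\pm\delta_x$ on $\mathbf{H}(\Lambda_+,\cdot)$ (Proposition \ref{propfourmod}) together with the observation that this operator is unitriangular on the polynomial filtration by \eqref{si3} --- a cleaner deduction than the paper's $\delta_y$ manipulation, though ultimately resting on the same identities. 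The most substantive divergence is at the end: the paper's ``it suffices'' tacitly assumes $p^{-\odot}I\odot(pI\odot f)=f$, i.e.\ that the operator $P(Z)$ is injective on all discrete analytic functions (not just rational ones), whereas you prove this outright by reducing to the injectivity of each $I-\nu Z$ and using the eigenfunction analysis of $\delta_x$ (Proposition \ref{eigenprop} and the fact that $e_\nu(0)=1$, including the exceptional eigenfunction $e_{-1}$ on $\Lambda_+$). Your version is longer but more self-contained and fills in a step the paper leaves implicit; the paper's is shorter because it reuses the realization and $\odot$-inverse machinery of Propositions \ref{ratmap} and \ref{invprop}.
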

\begin{proof}
The "only if" part of the statement follows  immediately from Proposition \ref{ratmap} and identity \eqref{convpont}. As to the "if" part,  in view of Proposition \ref{invprop}, it suffices to show that if we choose,   among  all $\CC$-valued non-zero discrete analytic polynomials $p(z)$ with the property that $(pI\odot f)(z)$ is a discrete analytic polynomial,  a polynomial $p$ with $\deg(p)$ -- minimal, then $(Tp)(z)$ does not vanish on the set $\{0,-\alpha_\pm\}.$ However, if $(Tp)(0)=0,$ then $p=z\odot p_1,$ where $p_1=\delta_x p,$ $\deg(p_1)=\deg(p)-1,$
and $$p_1\odot f=\delta_x(p\odot f)$$
is a discrete analytic polynomial, in contradiction to the minimality of $\deg(p).$

Similarly, if $(Tp)(-\alpha_+)=0,$ then $p=(z+\alpha_+)\odot p_1,$ where $p_1$ is a discrete analytic polynomial,  $\deg(p_1)=\deg(p)-1,$ and
$$(z+\alpha_+)\odot (p_1\odot f)=q,$$
where $q(z)$ is a discrete analytic polynomial.  But then
$$(I+\alpha_+\delta_x)(p_1\odot f)=\delta_x q,
$$
and, since $D(p_1\odot f)=Dq=0,$
$$p_1\odot f+\delta_y(p_1\odot f)=i\delta_y q.$$
Thus $p_1\odot f$ is a discrete analytic polynomial, and again we arrive at a contradiction.\smallskip

Finally,  if $(Tp)(-\alpha_-)=0,$ then $p=(z+\alpha_-)\odot p_1,$ where $p_1$ is a discrete analytic polynomial,  $\deg(p_1)=\deg(p)-1,$ and
$$(z+\alpha_-)\odot (p_1\odot f)=q,$$
where $q(z)$ is a discrete analytic polynomial.  But then
$$(I+\alpha_-\delta_x)(p_1\odot f)=\delta_x q,
$$
and, since $D(p_1\odot f)=Dq=0,$
$$p_1\odot f=-i\delta_y q$$
 is a discrete analytic polynomial, leading to a contradiction in this case, as well.
\end{proof}
As a final result of this section, we shall address the notion of discrete analytic rationality in terms of backward-shift invariance.
\begin{theorem}
A discrete analytic function $f:\Lambda_+\longrightarrow\CC^{m\times n}$ is rational if,  and only if,  
$$\dim(\spa\{\delta_x f,\delta_x^2f,\dots\})<\infty.$$
\end{theorem}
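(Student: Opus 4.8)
The plan is to prove both implications directly; the ``only if'' part is immediate from the realization formula \eqref{realization}, and for ``if'' the task is to build a realization of $f$ out of the finite‑dimensional backward‑shift‑invariant subspace it spans. For ``only if'', let $f(z)=D+C(I-zA)^{-\odot}\odot(zB)$ with $A\in\CC^{N\times N}$. Exactly as in the proof of Proposition \ref{ratmap}, the identities $\delta_xZ=I$ and $\delta_x(I-zA)^{-\odot}=(I-zA)^{-\odot}A$ yield $\delta_x^kf=C(I-zA)^{-\odot}A^{k-1}B$ for $k\in\NN$; hence every $\delta_x^kf$ lies in the range of the linear map $M\mapsto C(I-zA)^{-\odot}M$ from $\CC^{N\times n}$ into matrix‑valued functions, so $\dim\spa\{\delta_xf,\delta_x^2f,\dots\}\le Nn<\infty$.

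For ``if'', write $f$ columnwise as $(f^{(1)}\mid\dots\mid f^{(n)})$ and set
\[
\mathcal M:=\spa\{\delta_x^kf^{(j)}:\ k\in\NN,\ 1\le j\le n\}\subseteq\mathbf H(\Lambda_+,\CC^m).
\]
Since $\delta_x^kf^{(j)}$ is the $j$-th column of $\delta_x^kf$, the hypothesis gives $\dim\mathcal M\le n\dim\spa\{\delta_xf,\delta_x^2f,\dots\}<\infty$; moreover $\mathcal M$ is invariant under $\delta_x$ and contains each column of $\delta_xf$. Let $r=\dim\mathcal M$ (the case $r=0$ is trivial), put $A_0:=\delta_x|_{\mathcal M}$, fix a basis identifying $\mathcal M$ with $\CC^r$, let $C:\mathcal M\to\CC^m$ be evaluation at $0$ (so $C\phi=\phi(0)$), let $D:=f(0)$, and let $B\in\CC^{r\times n}$ be the matrix whose $j$-th column is the coordinate vector of $(\delta_xf)^{(j)}\in\mathcal M$. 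First I would check that $A_0$ satisfies \eqref{excl}: by Proposition \ref{propfourmod} the operators $I+\alpha_\pm\delta_x$ are invertible on $\mathbf H(\Lambda_+,\CC)$, so the numbers $-2\alpha_\mp=-1/\alpha_\pm$ are not eigenvalues of $\delta_x$ there, hence — passing to a nonzero scalar component of an eigenvector — not eigenvalues of $\delta_x$ on $\mathbf H(\Lambda_+,\CC^m)$, hence (over $\CC$ every eigenvalue of the finite matrix $A_0$ is attained) not in $\sigma(A_0)$. In particular $I+\alpha_\pm A_0$ are invertible on $\mathcal M$, and, as a consequence of Proposition \ref{propfourmod} (which gives $I+\delta_y=(I+\alpha_+\delta_x)(I+\alpha_-\delta_x)^{-1}$ on $\mathbf H(\Lambda_+,\cdot)$), the subspace $\mathcal M$ is also $\delta_y$-invariant, with $(I+\delta_y)|_{\mathcal M}=(I+\alpha_+A_0)(I+\alpha_-A_0)^{-1}=:N$, a bijection of $\mathcal M$. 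By Theorem \ref{resolve}, $(I-zA_0)^{-\odot}$ then exists and equals $e_{A_0}(z)$ as in \eqref{resA}, and additivity of $\Re$ and $\Im$ on $\Lambda$ gives $e_{A_0}(w+1)=(I+A_0)e_{A_0}(w)$ and $e_{A_0}(w\pm i)=N^{\pm1}e_{A_0}(w)$.

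The heart of the matter is the identity
\[
\phi(w)=\bigl(e_{A_0}(w)\phi\bigr)(0),\qquad \phi\in\mathcal M,\ w\in\Lambda_+,
\]
in which the matrix $e_{A_0}(w)$ acts on $\mathcal M\cong\CC^r$. I would prove it by induction on $\Re w+|\Im w|$, carried out simultaneously over all $\phi\in\mathcal M$: the base case $w=0$ is $e_{A_0}(0)=I$, and the inductive step rests on the fact that $\mathcal M$ is carried into itself by the translations $I+\delta_x$ and $(I+\delta_y)^{\pm1}$, acting on $\mathcal M$ as $I+A_0$ and $N^{\pm1}$, together with the factorizations above; for instance $\phi(w+1)=((I+A_0)\phi)(w)=(e_{A_0}(w)(I+A_0)\phi)(0)=(e_{A_0}(w+1)\phi)(0)$, and the vertical moves use $N^{\pm1}$ (note one never needs $e_{A_0}$ at $w-1$, which matters because $I+A_0$ may fail to be invertible). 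Granting the identity, put $h(z):=Ce_{A_0}(z)B$: its $j$-th column is $h^{(j)}(z)=\bigl(e_{A_0}(z)(\delta_xf)^{(j)}\bigr)(0)=(\delta_xf)^{(j)}(z)$, so $h=\delta_xf$; then \eqref{structural-identity1} gives $f=f(0)+Z\delta_xf=D+Z(Ce_{A_0}(\cdot)B)=D+C(I-zA_0)^{-\odot}\odot(zB)$, a representation of the form \eqref{realization}, so $f$ is rational.

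The main obstacle is organizational rather than computational: one must keep straight the two roles of $\mathcal M$ — as an abstract $\CC^r$ on which $A_0$ and $e_{A_0}(w)$ operate, and as a space of functions on $\Lambda_+$ on which $C$ and point evaluations live — and resist the temptation to write $\delta_xf=\sum_{n\ge0}(\delta_x^{n+1}f)(0)\,z^{(n)}$, which is precisely the step that is unavailable here because $f$ need not admit a power series expansion. Formulating the identity $\phi(w)=(e_{A_0}(w)\phi)(0)$ and running the joint induction cleanly is the crux; the remainder is routine unwinding of \eqref{structural-identity1}, \eqref{structural-identity2}, \eqref{resA}, and Proposition \ref{propfourmod}.
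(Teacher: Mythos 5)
Your proof is correct and follows essentially the same route as the paper: restrict $\delta_x$ to the finite-dimensional backward-shift-invariant span, use Proposition \ref{propfourmod} to rule out the eigenvalues $-2\alpha_\pm$ of the restriction (the paper does this via a minimal dependence relation and a companion matrix, you via lifting eigenvectors to $\mathbf H(\Lambda_+,\CC^m)$ --- a cosmetic difference), and then recover a realization from the translation formulas of Theorem \ref{resolve}. Your write-up additionally spells out the reconstruction step (the identity $\phi(w)=(e_{A_0}(w)\phi)(0)$ and the induction over $\Lambda_+$) that the paper leaves implicit after establishing \eqref{excl}.
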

\begin{proof}
Much like in the proof of the previous theorem,  the "only if" part of the statement follows immediately from Proposition \ref{ratmap}, and it suffices to verify that if one chooses a minimal $n\in\NN,$ such that
$$\delta_x f,\delta^2_x f,\dots,\delta_x^{n+1}f$$
are linearly dependent,  then
$$F(z):=\begin{pmatrix} \delta_x f(z)&\delta_x^2f(z)&\dots&\delta_x^nf(z)\end{pmatrix}$$
satisfies 
$$\delta_x F=FA,$$
where $A$ satisfies \eqref {excl}.
However, in this case
$$(I+\alpha_\pm\delta_x)F=F(I+\alpha_\pm A),$$
and if $v$ is such that $(I+\alpha_\pm A)v=0,$ then, according to Proposition \ref{propfourmod},  $Fu=0.$ By minimality of $n,$ $u=0,$ hence   \eqref{excl} holds.
\end{proof}

\section{A realization for discrete analytic Schur functions}
\label{realsec}
We now give a characterization of discrete analytic Schur functions in terms of a realization.  Our approach is based on the $\epsilon$-method
of Krein and Langer (see e.g. \cite{kl1}),  that was adapted in \cite{adrs} using the underlying reproducing kernel Pontryagin spaces to find realizations of generalized Schur functions.

\begin{theorem}\label{schreal} Let $S(z)$ be a $\mathbf L(\mathfrak{H}_1,\mathfrak{H}_2)$-valued discrete analytic Schur function. Then the associated de Branges - Rovnyak space $\mathcal{H}(S)$ is translation-invariant: for every fixed $w\in\Lambda_+$ the translation
$$f(z)\mapsto f(z+w)$$
is a bounded linear operator on $\mathcal{H}(S).$
In particular,   $\mathcal{H}(S)$ is $\delta_x$-invariant,  and for every $f\in\mathcal{H}(S)$ the following estimate holds:
\begin{equation}\label{schineq} \|\delta_xf\|_{\mathcal{H}(S)}^2\leq \|f\|_{\mathcal{H}(S)}^2-\|f(0)\|_{\mathfrak{H}_2}^2.\end{equation}
Furthermore,
$S(z)$ can be represented in the form
\begin{equation}\label{schre0}S(z)=D+C(I-zA)^{-\odot}\odot(zB),\end{equation}
where
\begin{equation}\label{can1}\begin{pmatrix} A& B\\C&D\end{pmatrix}:\begin{pmatrix}   \mathcal{H}(S)\\   \mathfrak{H}_1\end{pmatrix}\longrightarrow\begin{pmatrix}
  \mathcal{H}(S)\\   \mathfrak{H}_2\end{pmatrix} \end{equation} is a coisometry defined by
\begin{equation}\label{can2}\begin{aligned}
Af(z)&=(\delta_x f)(z);&\quad  (B\lambda)(z)&=(\delta_xS)(z)\lambda;\\
Cf(z)&=f(0);&\quad D\lambda&=S(0)\lambda.
\end{aligned}\end{equation}
\end{theorem}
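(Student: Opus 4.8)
The plan is to prove the three assertions in a convenient logical order: first the difference--quotient inequality~\eqref{schineq} (which already contains the $\delta_x$-invariance of $\mathcal{H}(S)$), then translation-invariance as a corollary, and finally the coisometric realization. Throughout one uses that $\mathcal{H}(S)\subseteq\mathbf{H}(\Lambda_+,\mathfrak{H}_2)$, which is immediate because $K^S(\cdot,w)$ is discrete analytic and convergence in $\mathcal{H}(S)$ is pointwise. For~\eqref{schineq} I would start from Proposition~\ref{dbrrov}, which identifies $\mathcal{H}(S)$ with $\ran\sqrt{I-M_SM_S^*}$ equipped with the range norm; the de Branges--Rovnyak description of such ranges then gives, for $f\in\HH_2(\Lambda_+,\mathfrak{H}_2)$,
\[
\|f\|_{\mathcal{H}(S)}^2=\sup\bigl\{\,\|f+S\odot h\|_2^2-\|h\|_2^2\ :\ h\in\HH_2(\Lambda_+,\mathfrak{H}_1)\,\bigr\},
\]
the supremum being finite precisely when $f\in\mathcal{H}(S)$. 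I would then invoke two elementary facts about the Hardy space: since $Z^{*}=\delta_x$ and $Z$ is an isometry with $Z\delta_x=I-P_0$ ($P_0$ the orthogonal projection onto constants), one has $\|\delta_x u\|_2^2=\|u\|_2^2-\|u(0)\|_{\mathfrak{H}_2}^2$; and, comparing power--series coefficients, $\delta_x M_S Z=M_S$ on $\HH_2(\Lambda_+,\mathfrak{H}_1)$. For $f\in\mathcal{H}(S)$ and any $h$ these yield $\delta_x f+S\odot h=\delta_x\bigl(f+S\odot(Zh)\bigr)$ and $(f+S\odot(Zh))(0)=f(0)$ (because $(Zh)(0)=0$), whence
\[
\|\delta_x f+S\odot h\|_2^2-\|h\|_2^2=\bigl(\|f+S\odot(Zh)\|_2^2-\|Zh\|_2^2\bigr)-\|f(0)\|_{\mathfrak{H}_2}^2\leq\|f\|_{\mathcal{H}(S)}^2-\|f(0)\|_{\mathfrak{H}_2}^2 .
\]
Taking the supremum over $h$ shows at once that $\delta_x f\in\mathcal{H}(S)$ and that~\eqref{schineq} holds; in particular $\|\delta_x\|_{\mathcal{H}(S)\to\mathcal{H}(S)}\leq1$.

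For translation-invariance, since $|\alpha_\pm|=1/\sqrt2$ and $\|\delta_x\|_{\mathcal{H}(S)}\leq1$, the operators $I+\alpha_\pm\delta_x$ are boundedly invertible on $\mathcal{H}(S)$, the inverse being the Neumann series in $\delta_x$. As $\mathcal{H}(S)\subseteq\mathbf{H}(\Lambda_+,\mathfrak{H}_2)$, Proposition~\ref{propfourmod} applies on $\mathcal{H}(S)$ and expresses $I+\delta_y$ and $(I+\delta_y)^{-1}$ through bounded operators built from the $I+\alpha_\pm\delta_x$ and their inverses; together with $I+\delta_x$ this makes translation by $1$ and by $\pm i$ bounded on $\mathcal{H}(S)$. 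For a general $w=u+iv\in\Lambda_+$ ($u\in\ZZ_+$, $v\in\ZZ$) the translation $f(z)\mapsto f(z+w)$ equals $(I+\delta_x)^{u}(I+\delta_y)^{v}$ and is hence a bounded operator on $\mathcal{H}(S)$.

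For the realization, define $A,B,C,D$ by~\eqref{can2}. By the first step $A=\delta_x|_{\mathcal{H}(S)}$ is a contraction, and $C$ (evaluation at $0$) and $D=S(0)$ are obviously bounded. That $B\lambda=(\delta_x S)(\cdot)\lambda$ lies in $\mathcal{H}(S)$ I would get by repeating the estimate of the first step with $f=0$ and the constant function $\lambda$ in place of $h$: using $(\delta_x S)\lambda=\delta_x(S\odot g_\lambda)$ for $g_\lambda\equiv\lambda$, that $g_\lambda\perp Z\HH_2(\Lambda_+,\mathfrak{H}_1)$, the contractivity of $M_S$, and $\delta_x M_S Z=M_S$, one obtains $\|(\delta_x S)\lambda+S\odot h\|_2^2-\|h\|_2^2\leq\|\lambda\|^2-\|S(0)\lambda\|^2$ for every $h$, so $B\lambda\in\mathcal{H}(S)$ by the supremum formula. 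Since $\rho(A)\leq1<\sqrt2$, the remark following Theorem~\ref{resolve} gives $(I-zA)^{-\odot}=\sum_{n\geq0}z^{(n)}A^{n}$, and a direct computation then yields $D+C(I-zA)^{-\odot}\odot(zB)=\sum_{n\geq0}(\delta_x^{n}S)(0)\,z^{(n)}=S(z)$, i.e.~\eqref{schre0}. It remains to check that $\begin{pmatrix}A&B\\ C&D\end{pmatrix}$ is coisometric; as all four entries are bounded this amounts to $\begin{pmatrix}A&B\\ C&D\end{pmatrix}^{*}$ being an isometry, and by sesquilinearity and continuity it suffices to verify the corresponding inner--product identities on the total set of vectors $\binom{K^S(\cdot,w)\beta}{\alpha}$ ($w\in\Lambda_+$, $\alpha,\beta\in\mathfrak{H}_2$). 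Here the reproducing property gives $A^{*}K^S(\cdot,w)\beta=\bigl(K^S(\cdot,w+1)-K^S(\cdot,w)\bigr)\beta$, $B^{*}K^S(\cdot,w)\beta=(\delta_x S)(w)^{*}\beta$, $C^{*}\alpha=K^S(\cdot,0)\alpha$, $D^{*}\alpha=S(0)^{*}\alpha$, and after expanding $\|\cdot\|_{\mathcal{H}(S)}^{2}$ via $\langle K^S(\cdot,u)\gamma,K^S(\cdot,v)\delta\rangle=\langle K^S(v,u)\gamma,\delta\rangle$ the required relations reduce to a finite--difference identity for $K^S$, which follows from the series $K^S(z,w)=\sum_n\bigl(z^{(n)}\overline{w^{(n)}}I-(Z^{n}S)(z)(Z^{n}S)(w)^{*}\bigr)$ together with $\delta_x z^{(n)}=z^{(n-1)}$ and $\delta_x Z^{n}S=Z^{n-1}S$ for $n\geq1$.

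The main obstacle is the first step: upgrading the bare contractivity of $M_S$ to the \emph{sharp} defect inequality~\eqref{schineq}. This is exactly where the positivity of $K^S$ (the Schur hypothesis) is used essentially; in the generalized (Pontryagin--space) situation one cannot argue this directly and must instead run the $\epsilon$-method of Krein--Langer through reproducing--kernel Pontryagin spaces, and accordingly set up the last step by means of linear relations so as to avoid a priori domain and boundedness assumptions on $A,B,C,D$. The finite--difference kernel identity closing the last step is routine but is the other point where care is needed in the bookkeeping.
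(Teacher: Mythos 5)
Your argument is correct, but it reaches the conclusion by a genuinely different route from the paper's. The paper does everything in one stroke with the $\epsilon$-method: it introduces the linear relation $R$ spanned by the pairs
\[
\left(\begin{pmatrix} K^S(\cdot,w)\alpha\\ \beta\end{pmatrix},\ \begin{pmatrix} (K^S(\cdot,w+1)-K^S(\cdot,w))\alpha+K^S(\cdot,0)\beta\\ (\delta_xS)(w)^*\alpha+S(0)^*\beta\end{pmatrix}\right),
\]
verifies by the kernel computations (essentially the same finite--difference identities you defer to the end) that $R$ is isometric, and then invokes the fact that a densely defined isometric relation extends to the graph of an everywhere-defined isometry; the adjoint is the coisometry \eqref{can1}--\eqref{can2}. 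In that order of reasoning, the boundedness of $A$ and $B$, the $\delta_x$-invariance of $\mathcal{H}(S)$, and the inequality \eqref{schineq} (from $\binom{A}{C}$ being a contraction) all come out as corollaries of the colligation, and the same machinery survives in the Pontryagin-space setting --- which is exactly the point you correctly flag at the end. You instead prove \eqref{schineq} first, via the complemented-space supremum formula $\|f\|_{\mathcal{H}(S)}^2=\sup_h\bigl(\|f+S\odot h\|_2^2-\|h\|_2^2\bigr)$ together with the identities $\delta_xM_SZ=M_S$ and $\|\delta_xu\|_2^2=\|u\|_2^2-\|u(0)\|^2$; this is the classical de Branges--Rovnyak/Sarason argument transplanted to the discrete analytic Hardy space, and it is sound, but note that the supremum formula is not contained in Proposition \ref{dbrrov} as stated (that proposition only gives the range-norm description), so you are importing an additional standard fact. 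Your approach buys a more elementary, Hilbert-space-only derivation of \eqref{schineq} and of $B\lambda\in\mathcal{H}(S)$, at the cost of having to establish boundedness of each block separately before taking adjoints and verifying coisometry on the total set $\binom{K^S(\cdot,w)\beta}{\alpha}$; the paper's relation-theoretic route avoids all a priori boundedness and domain issues. The translation-invariance step and the final identification $D+C(I-zA)^{-\odot}\odot(zB)=S(0)+(Z\delta_xS)(z)=S(z)$ are essentially the same in both treatments (the paper computes $((I-zA)^{-\odot}f)(s)=f(s+z)$ directly rather than summing the Neumann series, but these are equivalent since $\rho(A)\le 1<\sqrt2$).
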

\begin{proof}
Consider the linear relation
$$R\subset\begin{pmatrix}
  \mathcal{H}(S)\\   \mathfrak{H}_2\end{pmatrix}\times \begin{pmatrix}   \mathcal{H}(S)\\   \mathfrak{H}_1\end{pmatrix}$$
which is spanned by the pairs 
$$\left(\begin{pmatrix} K^S(\cdot, w)\alpha\\ \beta\end{pmatrix},\begin{pmatrix} (K^S(\cdot, w+1)-K^S(\cdot,w))\alpha+K^S(\cdot,0)\beta\\ (\delta_x S)(w)^{*}\alpha+S(0)^{*}\beta\end{pmatrix}\right)$$
as $w$ ranges over $\Lambda_+,$ $\alpha$ and $\beta$ -- over $  \mathfrak{H}_2.$
The relation $R$ is isometric,  since
\begin{multline*}
\langle(K^S(\cdot, w_1+1)-K^S(\cdot,w_1))\alpha_1,(K^S(\cdot, w_2+1)-K^S(\cdot,w_2))\alpha_2\rangle_{  \mathcal{H}(S)}\\=
\langle(K^S(w_2+1,w_1+1)-K^S(w_2,w_1+1))\alpha_1,\alpha_2\rangle_{  \mathfrak{H}_2}-\\
-\langle(K^S(w_2+1,w_1)-K^S(w_2,w_1))\alpha_1,\alpha_2\rangle_{  \mathfrak{H}_2}\\
=\langle (\delta_xK^S(\cdot,w_1+1))(w_2)\alpha_1,\alpha_2\rangle_{  \mathfrak{H}_2}-
\langle (\delta_xK^S(\cdot,w_1))(w_2)\alpha_1,\alpha_2\rangle_{  \mathfrak{H}_2}\\
=\langle (ZK^S(\cdot,w_2))(w_1+1)^*\alpha_1,\alpha_2\rangle_{  \mathfrak{H}_2}-\langle (\delta_xS)(w_2)S(w_1+1)^*\alpha_1,\alpha_2\rangle_{  \mathfrak{H}_2}
\\-
\langle (ZK^S(\cdot,w_2))(w_1)^*\alpha_1,\alpha_2\rangle_{  \mathfrak{H}_2}+\langle (\delta_xS)(w_2)S(w_1)^*\alpha_1,\alpha_2\rangle_{  \mathfrak{H}_2}\\
=\langle K^S(w_2,w_1)\alpha_1,\alpha_2\rangle_{\mathfrak{H}_2}-[(\delta_xS)(w_1)^{*}\alpha_1,(\delta_xS)(w_2)^{*}\alpha_2\rangle_{  \mathfrak{H}_1};
\end{multline*}
\begin{multline*}
\langle K^S(\cdot,0)\beta_1,K^S(\cdot,0)\beta_2\rangle_{  \mathcal{H}(S)}=\langle K^S(0,0)\beta_1,\beta_2\rangle_{  \mathfrak{H}_2}\\
=\langle\beta_1,\beta_2\rangle_{  \mathfrak{H}_2}-\langle S(0)^{*}\beta_1,S(0)^{*}\beta_2\rangle_{  \mathfrak{H}_1};
\end{multline*}
\begin{multline*}
\langle(K^S(\cdot, w+1)-K^S(\cdot,w))\alpha,K^S(\cdot,0)\beta\rangle_{  \mathcal{H}(S)}=\langle(K^S(0, w+1)-K^S(0,w))\alpha,\beta\rangle_{  \mathfrak{H}_2}\\
=-\langle S(0)(\delta_xS)(w)^*\alpha,\beta\rangle_{  \mathfrak{H}_2}=-\langle (\delta_xS)(w)^*\alpha,S(0)^*\beta\rangle_{  \mathfrak{H}_1}
\end{multline*}
Since $R$ is an isometric relation with a dense domain,  it extends as the graph of an isometry
$$\begin{pmatrix} A'& C'\\B'&D'\end{pmatrix}:\begin{pmatrix}   \mathcal{H}(S)\\   \mathfrak{H}_2\end{pmatrix}\longrightarrow\begin{pmatrix}
  \mathcal{H}(S)\\   \mathfrak{H}_1\end{pmatrix} ;$$  the adjoint is a  coisometry \eqref{can1},  where $A,B,C,D$ are as in \eqref{can2}.
In particular,  $\begin{pmatrix}A\\C\end{pmatrix}$ is a contraction,  which implies \eqref{schineq}. Furthermore,  since $A=\delta_x$ is a contraction on $\mathcal{H}(S),$ it follows from Proposition \ref{propfourmod} that
$$I+\alpha_+\delta_y=(I+\alpha_-\delta_x)^{-1}$$ is a bounded operator on $  \mathcal{H}(S),$
and that $$I+\delta_y=(I+\alpha_+\delta_x)(I+\alpha_-\delta_x)^{-1}$$ is an invertible operator on $  \mathcal{H}(S).$  Since  $\mathcal{H}(S)$ is invariant under the action of operators $I+\delta_x$,  $I+\delta_y$ and $(I+\delta_y)^{-1}$,  it is translation-invariant.  
Finally,   for any $f(s)\in   \mathcal{H}(S)$  it holds that
\begin{multline*}((I-zA)^{-\odot}f)(s)=((I+\delta_x)^{\Re(z)}\left((I+\alpha_+\delta_x) (I+\alpha_-\delta_x)^{-1}\right)^{\Im(z)}f)(s)\\=((I+\delta_x)^{\Re(z)}(I+\delta_y)^{\Im(z)}f)(s)=f(s+z),
\end{multline*}
hence
$$C(I-zA)^{-\odot}f=f(z)$$ is the point evaluation operator,
and 
$$D+C(I-zA)^{-\odot}\odot (zB)=S(0)+(Z\delta_xS)(z)=S(z).$$
\end{proof}
 
Formula \eqref{schre0} associates to a discrete analytic Schur function  a certain canonical coisometric operator colligation.   As the following  result shows,   every coisometric colligation corresponds to a discrete analytic Schur function.

\begin{theorem}\label{charschur} Let $$S(z)=D+C(I-zA)^{-\odot}\odot(zB),$$
where
$$\begin{pmatrix} A& B\\C&D\end{pmatrix}:\begin{pmatrix} \mathcal H\\   \mathfrak{H}_1\end{pmatrix}\longrightarrow\begin{pmatrix}
\mathcal H\\   \mathfrak{H}_2\end{pmatrix} $$ is a coisometry; $\mathcal H$ is a Hilbert space.
Then 
$S(z)$ is a discrete analytic Schur function and 
$K^S(z,w)$ is of the form \begin{equation}\label{schker}
K^S(z,w)=C(I-zA)^{-\odot}(I-\bar wA^*)^{-\odot}C^*.\end{equation}
\begin{proof}
Denote
$$F(z)=C(I-zA)^{-\odot}.$$
Then $$S=D+(ZF)(z)B.$$
Since
$$(ZF)(z)A+C=C+C(I-zA)^{-\odot}\odot(zA)=C(I-zA)^{-\odot}\odot(I-zA+zA)=F(z),$$
one has
$$ \begin{pmatrix} (ZF)(z)&I\end{pmatrix}\begin{pmatrix} A\\ C\end{pmatrix}=F(z).$$
Furthermore, 
$$S(z)=\begin{pmatrix}(ZF)(z)&I\end{pmatrix}\begin{pmatrix}B\\ D\end{pmatrix}.$$
Therefore,
\begin{multline*}
(Z^nS)(z)(Z^n S)(w)^{*}\\=\begin{pmatrix}(Z^{n+1}F)(z)&z^{(n)}I\end{pmatrix}\begin{pmatrix}B\\ D\end{pmatrix}\begin{pmatrix}B^{*}& D^{*}\end{pmatrix}\begin{pmatrix}(Z^{n+1}F)(w)^{*} \\ \bar w^{(n)}I\end{pmatrix}\\
=\begin{pmatrix}(Z^{n+1}F)(z)&z^{(n)}I\end{pmatrix}\begin{pmatrix}(Z^{n+1}F)(w)^{*} \\ \bar w^{(n)}I\end{pmatrix}-\\
-\begin{pmatrix}(Z^{n+1}F)(z)&z^{(n)}I\end{pmatrix}\begin{pmatrix}A\\ C\end{pmatrix}\begin{pmatrix}A^{*}& C^{*}\end{pmatrix}\begin{pmatrix}(Z^{n+1}F)(w)^{*} \\ \bar w^{(n)}I\end{pmatrix}\\
=(Z^{n+1}F)(z)(Z^{n+1}F)(w)^{*}+z^{(n)}\bar w^{(n)}I-(Z^nF)(z)(Z^nF)(w)^{*}.
\end{multline*}
Since,  by Lemma \ref{zzero},  $(Z^nF)(z)\rightarrow 0$ pointwise as $n\rightarrow\infty,$ 
$$K^S(z,w)=F(z)F(w)^{*}$$ is a positive kernel.
\end{proof}
\end{theorem}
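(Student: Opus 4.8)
The plan is to express both $S$ and the kernel $K^S$ through the single discrete analytic function $F(z)=C(I-zA)^{-\odot}$, and then to read off positivity and the closed form of $K^S$ from the coisometry relation via a telescoping sum. First note that $F$ and $S$ are well defined and discrete analytic on $\Lambda_+$: a coisometry satisfies $AA^*+BB^*=I$, so $\|A\|\le 1$, hence $\sigma(A)\subset\overline{\DD}$ and $\{-2\alpha_\pm\}\cap\sigma(A)=\emptyset$ since $|1\pm i|=\sqrt2$; thus Theorem \ref{resolve} applies, $(I-zA)^{-\odot}=e_A(z)$ is discrete analytic, commutes with $A$, and is the two-sided $\odot$-inverse of $I-zA$.

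I would then record two algebraic identities. By the definition of $\odot$ against a polynomial factor, $S(z)=D+(ZF)(z)B$. Using $(I-zA)^{-\odot}\odot(I-zA)=I$ and the fact that left multiplication by a constant matrix commutes with $\odot$ by a polynomial, one gets $F\odot(I-zA)=C$; writing out $F\odot(I-zA)=F-(ZF)A$ this becomes $C+(ZF)(z)A=F(z)$. Collecting these as $\begin{pmatrix}(ZF)(z)&I\end{pmatrix}\begin{pmatrix}A\\C\end{pmatrix}=F(z)$ and $\begin{pmatrix}(ZF)(z)&I\end{pmatrix}\begin{pmatrix}B\\D\end{pmatrix}=S(z)$, and applying $Z^n$ — which commutes with right multiplication by the constant matrices $A,B,C,D$ and maps $1\mapsto z^{(n)}$ — I obtain
$$(Z^nS)(z)=\begin{pmatrix}(Z^{n+1}F)(z)&z^{(n)}I\end{pmatrix}\begin{pmatrix}B\\D\end{pmatrix},\qquad (Z^{n+1}F)(z)A+z^{(n)}C=(Z^nF)(z).$$

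Next I would feed the coisometry condition, in the form $\begin{pmatrix}B\\D\end{pmatrix}\begin{pmatrix}B^*&D^*\end{pmatrix}=I-\begin{pmatrix}A\\C\end{pmatrix}\begin{pmatrix}A^*&C^*\end{pmatrix}$, into $(Z^nS)(z)(Z^nS)(w)^*$. The $I$ part contributes $(Z^{n+1}F)(z)(Z^{n+1}F)(w)^*+z^{(n)}\overline{w^{(n)}}I$, while the $\begin{pmatrix}A\\C\end{pmatrix}\begin{pmatrix}A^*&C^*\end{pmatrix}$ part collapses, via the second identity above applied to each factor, to $(Z^nF)(z)(Z^nF)(w)^*$. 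Rearranging gives the telescoping identity
$$z^{(n)}\overline{w^{(n)}}I-(Z^nS)(z)(Z^nS)(w)^*=(Z^nF)(z)(Z^nF)(w)^*-(Z^{n+1}F)(z)(Z^{n+1}F)(w)^*.$$
Summing over $0\le n\le N$ leaves $F(z)F(w)^*-(Z^{N+1}F)(z)(Z^{N+1}F)(w)^*$. Since $F$ is a discrete analytic function with values in a normed space, Lemma \ref{zzero} gives $(Z^nF)(z)\to 0$ pointwise, so letting $N\to\infty$ the series defining $K^S$ converges to $F(z)F(w)^*$, a manifestly positive kernel; the diagonal case $z=w$ together with $K(z,z)<\infty$ on $\Lambda_+$ yields $\sum_n\|(Z^nS)(z)\|^2<\infty$. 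Hence $S$ is a discrete analytic Schur function and \eqref{schker} holds.

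The step I expect to be the main obstacle is the identity $(Z^{n+1}F)(z)A+z^{(n)}C=(Z^nF)(z)$, equivalently $C+(ZF)(z)A=F(z)$: it rests on handling the convolution product carefully — that $(I-zA)^{-\odot}$ is the $\odot$-inverse of $I-zA$, that left multiplication by a constant passes through $\odot$ with a polynomial, and that $Z$ commutes with right multiplication by constant matrices — together with keeping the $2\times 2$ block bookkeeping straight. Everything after that is a formal telescoping argument plus the already-available Lemma \ref{zzero}.
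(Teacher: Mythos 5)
Your proposal is correct and follows essentially the same route as the paper: the identities $S=D+(ZF)B$ and $F=(ZF)A+C$, the insertion of the coisometry relation into $(Z^nS)(z)(Z^nS)(w)^*$ to produce a telescoping sum, and Lemma \ref{zzero} to kill the remainder term. Your added remarks --- that $\|A\|\le 1$ forces $\{-2\alpha_\pm\}\cap\sigma(A)=\emptyset$ since $|2\alpha_\pm|=\sqrt2$, and that the diagonal case yields the summability condition $\sum_n\|(Z^nS)(z)\|^2<\infty$ --- are details the paper leaves implicit, and they are correct.
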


We close this section with a theorem of Beurling - Lax type,  characterizing de Branges -Rovnyak spaces in terms of translation-invariance.

\begin{theorem} \label{beurlax} Let $ \mathfrak{H}_2$ be a Hilbert space, and let $\mathcal{H}$ be a Hilbert space of $ \mathfrak{H}_2$-valued functions,  that are discrete analytic in $\Lambda_+$. Suppose that space
  $\mathcal H$  is $\delta_x$-invariant, and that estimate 
\begin{equation}\label{schineq2} 
\|\delta_xf\|_{\mathcal{H}}^2\leq \|f\|_{\mathcal{H}}^2-\|f(0)\|_{\mathfrak{H}_2}^2.
\end{equation}
holds for every $f\in\mathcal{H}.$ Then there is a Hilbert space $ \mathfrak{H}_1$ and a
  $\mathbf L( \mathfrak{H}_1,   \mathfrak{H}_2)$-valued discrete analytic Schur function $S(z),$ such that $\mathcal H=  \mathcal{H}(S).$
\end{theorem}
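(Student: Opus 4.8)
The plan is to reverse the construction of Theorem~\ref{schreal}. Out of the data $(\mathcal H,\delta_x,\mathrm{ev}_0)$, where $\mathrm{ev}_z\colon f\mapsto f(z)$, I would build a coisometric colligation, feed it to Theorem~\ref{charschur} to obtain a discrete analytic Schur function $S$, and then check that $\mathcal H$ and $\mathcal H(S)$ carry the same reproducing kernel; since a positive kernel determines its reproducing kernel Hilbert space --- norm included --- uniquely, this yields $\mathcal H=\mathcal H(S)$.

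First I would unpack \eqref{schineq2}: it says exactly that $T:=\begin{pmatrix}\delta_x\\ \mathrm{ev}_0\end{pmatrix}$ is a contraction from $\mathcal H$ into $\mathcal H\oplus\mathfrak H_2$, so in particular $\delta_x$ is a contraction on $\mathcal H$ and $\mathrm{ev}_0$ is bounded. Since $|\alpha_\pm|=1/\sqrt2<1$, the operators $I+\alpha_\pm\delta_x$ are boundedly invertible on $\mathcal H$, and then --- by the same bookkeeping used in the proof of Theorem~\ref{schreal} --- Proposition~\ref{propfourmod} shows that $I+\alpha_+\delta_y=(I+\alpha_-\delta_x)^{-1}$ and $I+\delta_y=(I+\alpha_+\delta_x)(I+\alpha_-\delta_x)^{-1}$ are bounded and boundedly invertible on $\mathcal H$, while $I+\delta_x$ is bounded. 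Consequently $\mathcal H$ is invariant under every translation $\tau_w=(I+\delta_x)^{\Re w}(I+\delta_y)^{\Im w}$, $w\in\Lambda_+$, with $\tau_w$ bounded; hence each $\mathrm{ev}_z=\mathrm{ev}_0\circ\tau_z$ is bounded and $\mathcal H$ is a reproducing kernel Hilbert space, whose kernel satisfies $\widetilde K(z,w)=\mathrm{ev}_z\,\mathrm{ev}_w^{*}$. Finally, by \eqref{resA} together with \eqref{mod2}, the operator $(I-z\delta_x)^{-\odot}$ acts on $\mathcal H$ as the translation $\tau_z$.

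Next I would complete $T$ to a coisometry by the standard defect-space construction: put $\mathfrak H_1=\overline{\ran(I-TT^{*})}$ and let $\begin{pmatrix}B\\ D\end{pmatrix}=(I-TT^{*})^{1/2}$, viewed as a map $\mathfrak H_1\to\mathcal H\oplus\mathfrak H_2$; then $\begin{pmatrix}A& B\\ C& D\end{pmatrix}$ with $A=\delta_x$, $C=\mathrm{ev}_0$ is a coisometry from $\mathcal H\oplus\mathfrak H_1$ onto $\mathcal H\oplus\mathfrak H_2$. By Theorem~\ref{charschur}, $S(z):=D+C(I-zA)^{-\odot}\odot(zB)$ is an $\mathbf L(\mathfrak H_1,\mathfrak H_2)$-valued discrete analytic Schur function with $K^S(z,w)=F(z)F(w)^{*}$, where $F(z)=C(I-zA)^{-\odot}$. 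Since $(I-zA)^{-\odot}=\tau_z$ on $\mathcal H$, we get $F(z)f=C\tau_z f=(\tau_z f)(0)=f(z)$, i.e.\ $F(z)=\mathrm{ev}_z$; therefore $K^S(z,w)=\mathrm{ev}_z\,\mathrm{ev}_w^{*}=\widetilde K(z,w)$, and $\mathcal H=\mathcal H(S)$ follows.

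I expect the one genuinely delicate point to be the first step: Proposition~\ref{propfourmod} is stated for the model spaces $\mathbf H(\Lambda_+,\cdot)$, and one must check that its algebraic identities translate into bona fide boundedness and invertibility statements on the abstract Hilbert space $\mathcal H$ --- which is exactly where the contractivity hypothesis \eqref{schineq2} is used, through $\|\alpha_\pm\delta_x\|\le|\alpha_\pm|<1$ --- so that $(I-z\delta_x)^{-\odot}$ can legitimately be recognized as translation on $\mathcal H$. Once that is secured, the coisometric completion and the comparison of reproducing kernels are purely formal, and the whole argument runs parallel to the classical de Branges--Rovnyak reasoning already invoked for Theorem~\ref{schreal}.
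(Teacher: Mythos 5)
Your proposal is correct and follows essentially the same route as the paper: define $A=\delta_x$, $C=\mathrm{ev}_0$, complete the contraction $\begin{pmatrix}A\\ C\end{pmatrix}$ to a coisometric colligation via the defect operator, invoke Theorem~\ref{charschur}, and identify $C(I-zA)^{-\odot}$ with point evaluation to match the reproducing kernels. The only difference is cosmetic --- you spell out the translation-invariance/boundedness bookkeeping that the paper's proof leaves implicit by reference to the argument in Theorem~\ref{schreal}.
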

\begin{proof}
  Let $A$ and $C$ be defined as in \eqref{can2}. Then $A\in \mathbf L(\mathcal H),$ $C\in \mathbf L(\mathcal H,   \mathfrak{H}_2)$,
  and $M=\begin{pmatrix} A^*&C^*\end{pmatrix}\in \mathbf L(\mathcal H\oplus    \mathfrak{H}_2, \mathcal H)$ is a contraction.  Therefore,  there exists a Hilbert space $  \mathfrak{H}_1$ and
  $N\in \mathbf L(\mathcal H\oplus    \mathfrak{H}_2,  \mathfrak{H}_1),$ such that
  $$
  I_{\mathcal H\oplus   \mathfrak{H}_2}-M^*M=N^*N.$$
Write
$$N^*=\begin{pmatrix} B\\ D\end{pmatrix}\in\mathbf L( \mathfrak{H}_1,\mathcal H\oplus  \mathfrak{H}_2),$$
so that 
$$\begin{pmatrix} A&B\\C& D\end{pmatrix} $$ is a coisometry.    Then,  according to Theorem \ref{charschur}, $$S(z)=D+C(I-zA)^{-\odot}\odot(zB)$$ is a Schur function,  and  $K^S(z,w)$ is of the form \eqref{schker}.
On the other hand,  for each $z\in\Lambda_+,$ the operator $F(z)=C(I-zA)^{-\odot}$ is the operator of point evaluation at $z$. Since $F(z)\in \mathbf L(\mathcal H,\CC),$ 
$\mathcal H$ is a reproducing kernel Hilbert space, and its reproducing kernel is $F(z)F(w)^*=K^S(z,w)$. Thus $\mathcal H=   \mathcal{H}(S).$
\end{proof}

\section{The case of a mesh $h$}
\label{meshsec}

  We consider the lattice $\Lambda_h=h\mathbb Z+ih\mathbb Z$ with $h>0$ (one could more generally consider the lattice $\Lambda_{h,k}=h\mathbb Z+ik\mathbb Z$, with $h,k>0$)
A complex-valued function $f(z)$ defined on the lattice $\Lambda_h$ is {\sl discrete analytic} if
(see \cite{MR0013411})
  \[
    \dfrac{f(z+h(1+i))-f(z)}{h(1+i)}=\dfrac{f(z+h)-f(z+ih)}{h(1-i)},\quad z\in\Lambda.
  \]
  With $f(z)=u(x,y)+iv(x,y)$ we get
  \[
    \begin{split}
      \frac{u(x+h,y+h)+iv(x+h,y+h)-u(x,y)-iv(x,y)}{h(1+i)}&=\\
      &\hspace{-6cm}=
      \frac{u(x+h,y)+iv(x+h,y)-u(x,y+h)-iv(x,y+h)}{h(1-i)}
      \end{split}
    \]
and so we get 
\[
      v_x+v_y-i(u_x+u_y)=u_x-u_y+i(v_x-v_y)\quad{\rm as}\quad h\rightarrow 0
    \]
    and hence the Cauchy-Riemann equations.\\

Let $h>0$. When working in $\Lambda_h$ we set
\begin{eqnarray}
\delta_{x,h}f(z)&=&\frac{f(z+h)-f(z)}{h}\\
(Z_h f)(z)&=&\frac{(f(0)-f(z))h}{2}+\int_0^z f \delta z
\end{eqnarray}
We have
\[
 2\delta_{x,h}Z_h f(z)=f(z)-f(z+h)+(2/h)\int_z^{z+h}  f \delta z=2f(z)
\]

Let
    \begin{equation}
      x^{(n)}_h=\frac{x(x-h)\cdots (x-(n-1)h)}{n!}
    \end{equation}
    and $z^{(n)}$ the unique polynomial discrete analytic extension to $\Lambda_{+,h}$.
    We set
    \begin{equation}
K_h(z,w)=\sum_{n=0}^\infty z^{(n)}_h\overline{w^{(n)}_h}
     \end{equation} 
     and $\mathbf   \mathfrak{H}_2(\Lambda_{+,h})$ the associated reproducing kernel Hilbert space, which we will call Hardy space associated to the mesh $h$. The proof of the following results areas for $h=1$ and will be omitted.
     
\begin{lemma} In the associated Hardy space associated to $h$ we have
  \begin{equation}
    Z_h=\delta_{x,h}^*
    \end{equation}
  \end{lemma}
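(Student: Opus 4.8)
The plan is to repeat, with the mesh polynomials $z^{(n)}_h$ in place of the $z^{(n)}$, the argument showing that for $h=1$ the forward shift $Z$ is the adjoint of the backward shift $\delta_x$ on $\HH_2(\Lambda_+,\CC)$.

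First I would record two identities on the basis. From the explicit formula $x^{(n)}_h=\frac{x(x-h)\cdots(x-(n-1)h)}{n!}$ a short telescoping computation gives $\delta_{x,h}x^{(n)}_h=x^{(n-1)}_h$ on $h\ZZ$, and by uniqueness of the discrete analytic extension to $\Lambda_{+,h}$ this yields
\[
\delta_{x,h}z^{(n)}_h=z^{(n-1)}_h,\qquad n\in\NN,
\]
with the convention $z^{(0)}_h=1$ and $z^{(-1)}_h\equiv 0$, so that $\delta_{x,h}z^{(0)}_h=0$. For the companion identity I would use the relation $\delta_{x,h}Z_h=I$ displayed above: since then $\delta_{x,h}\bigl(Z_hz^{(n)}_h-z^{(n+1)}_h\bigr)=z^{(n)}_h-z^{(n)}_h=0$, and the kernel of $\delta_{x,h}$ on discrete analytic functions consists of the constants (as in the case $h=1$), the difference $Z_hz^{(n)}_h-z^{(n+1)}_h$ is a constant; evaluating at $z=0$, where $(Z_hf)(0)=0$ and $z^{(n+1)}_h(0)=0$, shows this constant is $0$, so $Z_hz^{(n)}_h=z^{(n+1)}_h$ for all $n\in\ZZ_+$.

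By the definition of the reproducing kernel $K_h(z,w)=\sum_{n}z^{(n)}_h\overline{w^{(n)}_h}$, the polynomials $\{z^{(n)}_h\}_{n\ge 0}$, being of pairwise distinct degrees and hence linearly independent, form an orthonormal basis of $\HH_2(\Lambda_{+,h})$. By the two identities just recorded, $Z_h$ acts on this basis as the unilateral forward shift and $\delta_{x,h}$ as its backward shift; in particular $Z_h$ is an isometry and $\delta_{x,h}$ a contraction, so both are bounded. (Here one should check that the operator $Z_h$ given by the integral formula does map $\HH_2(\Lambda_{+,h})$ into itself and coincides with this isometry: norm convergence in a reproducing kernel Hilbert space forces pointwise convergence, and for fixed $z$ the quantity $(Z_hg)(z)$ depends continuously on finitely many point evaluations of $g$.) It then remains to note that $\langle Z_hz^{(m)}_h,z^{(n)}_h\rangle=\langle z^{(m+1)}_h,z^{(n)}_h\rangle$ and $\langle z^{(m)}_h,\delta_{x,h}z^{(n)}_h\rangle=\langle z^{(m)}_h,z^{(n-1)}_h\rangle$ agree for all $m,n\ge 0$, both being $1$ when $n=m+1$ and $0$ otherwise. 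By sesquilinearity this extends to all discrete analytic polynomials, which are dense in $\HH_2(\Lambda_{+,h})$, and then by boundedness to all of $\HH_2(\Lambda_{+,h})$; hence $\langle Z_hf,g\rangle=\langle f,\delta_{x,h}g\rangle$ for all $f,g$, that is, $Z_h=\delta_{x,h}^*$.

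The calculations are routine; the only points meriting care are the identity $Z_hz^{(n)}_h=z^{(n+1)}_h$, which relies on the description of $\ker\delta_{x,h}$ on discrete analytic functions, and the verification that the integrally defined $Z_h$ is the bounded operator appearing in the adjoint relation — both dealt with exactly as in the case $h=1$.
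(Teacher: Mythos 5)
Your proof is correct and is exactly the argument the paper has in mind: the paper omits the proof of this lemma, stating only that it is as in the case $h=1$, and your writeup carries out that adaptation (the shift identities $\delta_{x,h}z^{(n)}_h=z^{(n-1)}_h$ and $Z_hz^{(n)}_h=z^{(n+1)}_h$, orthonormality of the basis read off from the form of $K_h$, then density of polynomials). The one point worth making fully explicit is that orthonormality of $\{z^{(n)}_h\}$ in the reproducing kernel Hilbert space requires not merely finite linear independence but uniqueness of $\ell^2$ power series expansions; this is the mesh-$h$ analogue of Proposition \ref{taylorthm} and is supplied by the expansion lemma immediately following the statement.
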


   \begin{lemma}
We have
    \begin{equation}
      f(x)=\sum_{n=0}^\infty (\delta_{x,h}^nf)(0)x_h^{(n)}
      \end{equation}
    \end{lemma}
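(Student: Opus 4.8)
The plan is to carry the proof of Proposition~\ref{taylorthm} over to the mesh $h$, reading the conclusion for a discrete analytic function $f$ on $\Lambda_{+,h}$ that satisfies the mesh-$h$ counterpart of the hypothesis there, namely $\lim_{N\to\infty}(Z_h^N\delta_{x,h}^Nf)(z)=0$ for every $z\in\Lambda_{+,h}$ (this holds automatically when $f$ lies in the Hardy space $\mathfrak{H}_2(\Lambda_{+,h})$, as I note at the end). The two structural ingredients needed are the mesh-$h$ identities
\[
(Z_h\delta_{x,h}f)(z)=f(z)-f(0),\qquad (\delta_{x,h}Z_hf)(z)=f(z),
\]
together with the mesh-$h$ analogues of \eqref{defz1} and \eqref{si3}, that is, $z_h^{(n)}=Z_h^n1$ and $\delta_{x,h}z_h^{(n)}=z_h^{(n-1)}$, with $z_h^{(0)}=1$ and $z_h^{(n)}(0)=0$ for $n\ge1$. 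All of these are obtained exactly as in the case $h=1$: the second structural identity is recorded above (in the form $2\delta_{x,h}Z_hf=2f$), and for the first one observes that $\delta_{x,h}(Z_h\delta_{x,h}f-f)=0$ by the second, that a discrete analytic function on $\Lambda_{+,h}$ annihilated by $\delta_{x,h}$ depends only on $\Im z$ and is therefore, by Ferrand's equation, constant, and that $(Z_hg)(0)=0$ for any $g$, so evaluating $Z_h\delta_{x,h}f-f$ at $z=0$ identifies the constant as $-f(0)$.

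Granting these, the telescoping argument in the proof of Proposition~\ref{taylorthm} transfers verbatim. Using $Z_h^nc=c\,z_h^{(n)}$ for a constant $c$, iterate the first structural identity:
\[
f(z)=f(0)+(Z_h\delta_{x,h}f)(z)=\cdots=\sum_{n=0}^{N-1}(\delta_{x,h}^nf)(0)\,z_h^{(n)}+(Z_h^N\delta_{x,h}^Nf)(z),
\]
and let $N\to\infty$; the remainder vanishes by hypothesis. Uniqueness of the resulting expansion follows, just as in Proposition~\ref{taylorthm}, from $\delta_{x,h}z_h^{(n)}=z_h^{(n-1)}$ together with $z_h^{(0)}(0)=1$ and $z_h^{(n)}(0)=0$ for $n\ge1$.

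For $f\in\mathfrak{H}_2(\Lambda_{+,h})$ the hypothesis is automatic: by the preceding lemma $\delta_{x,h}=Z_h^{*}$ is the backward shift on $\mathfrak{H}_2(\Lambda_{+,h})$, so $\|\delta_{x,h}^Nf\|\to0$; since $Z_h$ is an isometry this yields $\|Z_h^N\delta_{x,h}^Nf\|\to0$, and pointwise convergence follows because point evaluation at each $z\in\Lambda_{+,h}$ is a bounded functional on a reproducing kernel Hilbert space. The only real deviation from the $h=1$ computations, and the one spot that needs care, is the bookkeeping of the factors of $h$ in the definitions of $\delta_{x,h}$, $Z_h$ and $x_h^{(n)}$ while verifying the structural identities; once those are in hand, the rest is a line-by-line transcription of the proofs for $h=1$, which is why the authors omit it.
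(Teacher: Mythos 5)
Your proof is correct and is exactly what the paper intends: the authors explicitly omit the proof, stating that it is the same as for $h=1$, and your argument is a faithful transcription of the proof of Proposition~\ref{taylorthm} to the mesh-$h$ setting, with the structural identities and the $h$-bookkeeping duly verified.
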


     We see that
    \[
\lim_{h\rightarrow 0}x^{(n)}_h=\frac{x^n}{n!}
\]
and by discrete analytic polynomial extension
    \[
      \lim_{h\rightarrow 0}z^{(n)}_h=\frac{z^n}{n!}.
    \]

    \begin{proposition}
      The reproducing kernel of the Hardy space  $\mathbf   \mathfrak{H}_2(\Lambda_{+,h})$ tends to
      \begin{equation}
        \label{kl2}
K(z,w)=\sum_{n=0}^\infty\frac{z^n\overline{w}^n}{(n!)^2}.
\end{equation}
In that space it holds that
\begin{equation}
  \label{456}
(\partial^*f)(z)=\int_0^z f(s)ds
 \end{equation}
\end{proposition}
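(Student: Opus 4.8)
The statement has two parts: (i) the convergence $K_h\to K$ of reproducing kernels as $h\to 0$, and (ii) the identification of $\partial^*$ with integration in the limiting space. I would prove them in that order; the first is where the real work lies.

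\emph{Part (i): convergence of the kernels.} The starting point is the already-established termwise limit $z^{(n)}_h\to z^n/n!$. Writing $K_h(z,w)=\sum_{n\ge0}z^{(n)}_h\,\overline{w^{(n)}_h}$, what must be justified is the interchange of $\lim_{h\to0}$ with the sum, and for this I would control $z^{(n)}_h$ through its generating function. Since $\delta_{x,h}z^{(n)}_h=z^{(n-1)}_h$ (the mesh analogue of \eqref{si3}) and $z^{(n)}_h$ vanishes at the origin for $n\ge1$, the series $g_h(z):=\sum_{n\ge0}z^{(n)}_h\lambda^n$ is the $\delta_{x,h}$-eigenfunction with eigenvalue $\lambda$ normalized by $g_h(0)=1$; running the computation of Proposition~\ref{eigenprop} for mesh $h$ gives
\[
g_h(z)=(1+h\lambda)^{\Re(z)/h}\left(\frac{1+\alpha_+h\lambda}{1+\alpha_-h\lambda}\right)^{\Im(z)/h}.
\]
For $z\in\Lambda_{+,h}$ the exponent $\Re(z)/h$ is a non-negative integer, so the first factor is a polynomial in $\lambda$ and the only singularities of $g_h$ lie on $\{|\lambda|=\sqrt2/h\}$. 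Hence, fixing $R>0$ and taking $h$ small enough that $\sqrt2/h>R$, the explicit formula yields $\sup_{|\lambda|=R}|g_h(z)|\le e^{cR(\Re(z)+|\Im(z)|)}$ with $c$ an absolute constant and, crucially, the bound uniform in $h$; the Cauchy estimates then give $|z^{(n)}_h|\le R^{-n}e^{cR(\Re(z)+|\Im(z)|)}$. Taking $R=2$ produces a dominating bound $|z^{(n)}_h\,\overline{w^{(n)}_h}|\le C(z)C(w)\,4^{-n}$ valid for all small $h$, so dominated convergence gives $K_h(z,w)\to\sum_{n\ge0}\frac{z^n\overline w^n}{(n!)^2}=K(z,w)$, which is \eqref{kl2}.

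\emph{The main obstacle is bookkeeping rather than an idea:} the lattices $\Lambda_{+,h}$ vary with $h$, so a fixed point $z$ of the right half-plane lies in $\Lambda_{+,h}$ only for a discrete set of $h$'s. Thus the statement should be read either for $z,w$ ranging over $\bigcup_{h>0}\Lambda_{+,h}$ (a dense subset of the right half-plane), or, more robustly, as $K_h(z_h,w_h)\to K(z,w)$ for any $z_h,w_h\in\Lambda_{+,h}$ with $z_h\to z$, $w_h\to w$. The latter version is handled by the same argument once one checks that $g_h(z_h)\to e^{\lambda z}$ uniformly on $\{|\lambda|=R\}$ --- which follows because $(1+h\lambda)^{\Re(z_h)/h}\to e^{\lambda\Re(z)}$ and $\left(\tfrac{1+\alpha_+h\lambda}{1+\alpha_-h\lambda}\right)^{\Im(z_h)/h}\to e^{i\lambda\Im(z)}$ uniformly for $\lambda$ bounded (the exponents multiply logarithms of size $O(h)$) --- and the Cauchy-estimate bound above is stable under $z_h\to z$; then $z^{(n)}_h\to z^n/n!$ and the uniform domination both persist, and the conclusion follows. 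I expect this uniform-convergence verification, and the attendant care with the varying lattices, to be the only genuinely fiddly point.

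\emph{Part (ii): $\partial^*$ is integration.} In the limiting reproducing kernel Hilbert space $\mathcal H$ with kernel $K(z,w)=\sum_{n\ge0}\frac{z^n}{n!}\overline{\left(\frac{w^n}{n!}\right)}$, the functions $e_n(z):=z^n/n!$ form an orthonormal basis, so $\mathcal H$ consists of the entire functions $f=\sum_{n\ge0}c_ne_n$ with $\sum|c_n|^2<\infty$ and $\|f\|_{\mathcal H}^2=\sum|c_n|^2$. Complex differentiation $\partial=\frac{d}{dz}$ --- the $h\to0$ limit of $\delta_{x,h}$ --- satisfies $\partial e_n=e_{n-1}$ for $n\ge1$ and $\partial e_0=0$, i.e.\ it is the backward shift on $(e_n)$, hence a bounded coisometry on $\mathcal H$, and its adjoint is the forward shift $\partial^*e_n=e_{n+1}$. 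On the other hand, for $f=\sum c_ne_n\in\mathcal H$ the (path-independent, since $f$ is entire) integral is computed termwise,
\[
\int_0^z f(s)\,ds=\sum_{n\ge0}c_n\int_0^z\frac{s^n}{n!}\,ds=\sum_{n\ge0}c_n\,\frac{z^{n+1}}{(n+1)!}=\sum_{n\ge0}c_n\,e_{n+1}(z),
\]
which again lies in $\mathcal H$, with the same norm as $f$. Thus $f\mapsto\int_0^\cdot f$ and $\partial^*$ agree on the orthonormal basis, hence on all of $\mathcal H$, which is \eqref{456}. (Equivalently, this is the $h\to0$ form of the identities $Z_h=\delta_{x,h}^*$ and $\delta_{x,h}Z_h=I$, but since the spaces $\mathbf{\mathfrak{H}}_2(\Lambda_{+,h})$ vary with $h$, the direct verification in $\mathcal H$ above is the cleaner route.)
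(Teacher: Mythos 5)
Your proof is correct, and it is strictly more complete than the paper's: the paper's proof opens with ``We only prove \eqref{456}'' and establishes nothing about the convergence $K_h\to K$, whereas your Part (i) supplies a genuine argument for it (the mesh-$h$ generating function $g_h(z)=(1+h\lambda)^{\Re(z)/h}\bigl(\tfrac{1+\alpha_+h\lambda}{1+\alpha_-h\lambda}\bigr)^{\Im(z)/h}$, whose singularities sit on $\{|\lambda|=\sqrt2/h\}$, a uniform-in-$h$ Cauchy estimate on $|z^{(n)}_h|$, and dominated convergence), together with the honest observation that the lattices vary with $h$ and that the statement should be read along sequences $z_h\to z$ with $z_h\in\Lambda_{+,h}$ --- a point the paper glosses over entirely. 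For Part (ii) your argument and the paper's are the same computation in different clothing: the paper verifies $\langle\partial z^n,z^m\rangle=\langle z^n,\int_0^z s^m\,ds\rangle$ directly on monomials using $\langle z^n,z^m\rangle=(n!)^2\delta_{n,m}$, while you normalize to the orthonormal basis $e_n=z^n/n!$ and observe that $\partial$ is the backward shift and integration the forward shift; the shift-operator phrasing is slightly cleaner and makes the coisometry structure visible, but there is no mathematical difference. The only step you leave implicit is the termwise integration of $f=\sum c_ne_n$, which is justified because the series converges locally uniformly (via $|f(z)-f_N(z)|\le\|f-f_N\|\,K(z,z)^{1/2}$); that is routine and does not affect correctness.
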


\begin{proof}
  We only prove \eqref{456}. In the inner product of  the reproducing kernel Hilbert space
  with reproducing kernel \eqref{kl2} we have on the one hand,
  \[
   \langle \partial z^n, z^m\rangle=n\langle z^{n-1},z^m\rangle=(m+1)(m!)^2\delta_{n-1,m},
\]
and on the other hand,
  \[
   \langle z^n, \frac{z^{m+1}}{m+1}\rangle=\frac{1}{m+1}\langle z^{n},z^{m+1}\rangle=\frac{1}{m+1}((m+1)!)^2\delta_{n,m+1},
 \]
 for $n,m\in\mathbb N_0$, and the usual conventions for $n=0$ in the first case.
 This ends the proof since $\frac{1}{m+1}((m+1)!)^2=(m+1)(m!)^2$.
\end{proof}

We note that, at least formally,

\[
  \delta_{x,h}\longrightarrow \partial
\]
and
\[
Z_hf(z)\longrightarrow \int_0^zf(s)ds
\]
and that, for every $h$, as already noted.
\[
\delta_{x,h}^*=Z_h,
  \]
which, as $h\rightarrow 0$ gives \eqref{456}.

\section*{Acknowledgments}
Daniel Alpay thanks the Foster G. and Mary McGaw Professorship in Mathematical Sciences, which supported this research.

\bibliographystyle{plain}
\def\cprime{$'$} \def\cprime{$'$} \def\cprime{$'$}
  \def\lfhook#1{\setbox0=\hbox{#1}{\ooalign{\hidewidth
  \lower1.5ex\hbox{'}\hidewidth\crcr\unhbox0}}} \def\cprime{$'$}
  \def\cprime{$'$} \def\cprime{$'$} \def\cprime{$'$} \def\cprime{$'$}
  \def\cprime{$'$}

\end{document}